\newcommand{\dar}{{\downarrow}}
\newtheorem{proposition}{Proposition}[section]
\newtheorem{lemma}[proposition]{Lemma}
\newtheorem{corollary}[proposition]{Corollary}
\newtheorem{definition}[proposition]{Definition}
\newtheorem{theorem}[proposition]{Theorem}
\newcounter{opgaveteller}
\newcounter{lijst-teller}
\newcounter{boon}
\newcounter{boon2}
\newenvironment{proof}{\noindent {\bf Proof}. \nopagebreak }{\nopagebreak\hfill\rule{2mm}{3mm}}
\newenvironment{rlist}%
   {\begin{list}{\roman{lijst-teller}\/)\hfil}%
              {\labelwidth 2em%
               \leftmargin\labelwidth\advance\leftmargin by\labelsep%
               \usecounter{lijst-teller}}}%
   {\end{list}}
\newenvironment{arlist}
    {\begin{list}{\arabic{boon2}\/)\hfil}%
                 {\labelwidth 2em%
                 \leftmargin\labelwidth\advance\leftmargin by\labelsep%
                 \usecounter{boon2}}}%
    {\end{list}}
\title{Partial Combinatory Algebras of Functions}
\author{Jaap van Oosten\\ Department of Mathematics\\ Utrecht University\\ P.O.Box 
80.010, 3508 TA Utrecht, The Netherlands\\ {\tt J.vanOosten@uu.nl}}
\date{May 13, 2009}
\begin{document}
\maketitle

\begin{abstract} 
\noindent We employ the notions of `sequential function' and `interrogation' (dialogue) in order to define new partial combinatory algebra structures on sets of functions. These structures are analyzed using J. Longley's preorder-enriched category of partial combinatory algebras and decidable applicative structures.

We also investigate total combinatory algebras of partial functions. One of the results is, that every realizability topos is a quotient of a realizability topos on a total combinatory algebra.\end{abstract}

\noindent AMS Subject Classification (2000): 03B40,68N18

\section*{Introduction}\label{introsection}
Let us think of a computing device which, in the course of its calculations, is allowed to consult an oracle. I wish to keep the intuition of `computing device' as flexible as possible and refrain therefore from a definition; but one requirement I want to stick to is: the device will use only finitely many oracle queries in any terminating computation (there may be never terminating computations in which the device just keeps on passing queries to the oracle).

If a terminating computation always results in an output, the device then determines a partial function ${\cal O}\stackrel{\Phi}{\to}{\cal R}$, where $\cal O$ is the set of oracles and $\cal R$ the set of results (outputs). In cases of practical interest, $\cal R$ is a discrete set such as the set $\mathbb{N}$ of natural numbers, whereas $\cal O$, like the set of all functions $\mathbb{N}\to\mathbb{N}$, has a nontrivial topology. The finiteness requirement above implies in this example that the partial function ${\cal O}\to {\cal R}$ is continuous, and this is often taken as the meaning of the Use Principle in Recursion Theory (e.g.\  {\cite{SoareR:recesd}}, p.50: ``The Use Principle asserts that $\Phi _e$ is continuous'').

In this paper I concentrate on the situation where $\cal O$ is the set $A^A$ of functions $A\to A$ for some infinite set $A$, and queries are of form: ``what is your value at $a\in A$?''. I argue that here it makes sense to consider a subclass of the class of (partial) continuous functions on $A^A$: the {\em sequential\/} functions. Actually, the map $\Phi$ will turn out to be sequential.

It is shown that the sequential functions play an important role in the construction of {\em partial combinatory algebras}. I show that Kleene's construction of a partial combinatory algebra structure on $\mathbb{N}^{\mathbb{N}}$ (\cite{KleeneSC:forrff}; see \cite{OostenJ:reaics}, 1.4.3, for a concise exposition) can be generalized to a partial combinatory algebra structure on any set $A^A$ for infinite $A$.

Then, some further analysis is carried out in the case that $A$ itself has a partial combinatory algebra structure and the coding which is necessary for defining the structure of $A^A$, is actually definable from $A$. A universal property is obtained in Longley's category (\cite{LongleyJ:reatls}) of partial combinatory algebras and decidable applicative morphisms. We also look at sub-partial combinatory algebras of $A^A$.

Next, we discuss partial combinatory algebras on sets of {\em partial\/} functions. In the end we obtain a theorem in the theory of realizability toposes: every realizability topos is a quotient of a realizability topos on a total combinatory algebra.

\section{Sequential (partial) functions}\label{seqsection}
The notion `sequential' has been around for a long time, and stems from the study of Plotkin's calculus {\sc PCF} (\cite{PlotkinG:lcfcpl}). For a fairly recent paper discussing various approaches to the matter, see \cite{mellies:seqass}. 

The notion of a sequential tree was defined in \cite{OostenJ:casfft}. I slightly modify it here.
\begin{definition}\label{seqtreedef}\em Let $A$ be a set, and $T$ a set of finite functions $p:A'\to A$ with $A'\subset A$. We shall also write ${\rm dom}(p)$ for $A'$. The set $T$ is ordered by inclusion. $T$ is called a {\em sequential tree\/} if it is a tree, with the empty function as root, and has the property that for every $p\in T$ which is not a leaf, there is an element $a\not\in {\rm dom}(p)$ such that for all immediate successors $q$ of $p$ in $T$, we have ${\rm dom}(q)={\rm dom}(p)\cup\{ a\}$.

A sequential tree is {\em total\/} if for each $p\in T$ which is not a leaf, there is $a\not\in {\rm dom}(p)$ such that the set of immediate successors of $p$ in $T$ is the set of {\em all\/} functions $q$ satisfying ${\rm dom}(q)={\rm dom}(p)\cup\{ a\}$.\end{definition}
We shall mainly be interested in total sequential trees. Clearly, for such a tree, every function $A\to A$ determines a path through the tree. Suppose $F$ is a function from the set of leaves of $T$ to $A$. Then $F$ and $T$ determine a partial function $\Phi _{T,F}:A^A\to A$ as follows: $\Phi _{T,F}(\alpha )$ is defined if and only if the path through $T$ determined by $\alpha$ ends in a leaf $v$; and in that case, $\Phi _{T,F}(\alpha )=F(v)$.
\begin{definition}\label{seqfundef}\em A partial function $A^A\to A$ of the form $\Phi _{T,F}$ is called a {\em partial sequential function}.\end{definition}
Note, that a function $\Phi _{T,F}$ is a total function (i.e., everywhere defined) if and only if the tree $T$ is well-founded. 

The set $A$ is given the discrete topology and the set of functions $A^A$ the product topology. One of the lemmas underlying the construction of Kleene's ${\cal K}_2$ (a partial combinatory algebra of functions $\mathbb{N}^{\mathbb{N}}$) is that when $A$ is countable, every partial continuous function $A^A\to A$ with open domain is partial sequential.

If $A$ is uncountable, we still have that every {\em total\/} continuous function $A^A\to A$ is sequential, but this may fail for partial functions with open domain, as the following proposition shows.
\begin{proposition}\label{cont=seq} Let $A$ be an infinite set. Every continuous function $A^A\to A$ is sequential, but if $A$ is uncountable, there exist partial continuous functions with open domain, that are not partial sequential.\end{proposition}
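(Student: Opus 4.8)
Throughout, write $[p]=\{\alpha\in A^A:\alpha\supseteq p\}$ for the basic open determined by a finite partial function $p$, and call $p$ \emph{decided} if $\Phi$ is constant on $[p]$. Since $A$ is discrete, continuity of a total $\Phi$ means exactly that every $\alpha$ has a decided finite restriction; call such a finite set a \emph{modulus} at $\alpha$. For the first assertion I must produce a total sequential tree $T$ and a labelling $F$ of its leaves with $\Phi=\Phi_{T,F}$. As remarked after Definition~\ref{seqfundef}, $\Phi_{T,F}$ is total iff $T$ is well-founded, so the task is to design an adaptive querying strategy, one coordinate per node, that decides every $\alpha$ after finitely many queries.

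The plan is to build $T$ greedily: at a node $p$ that is already decided, make $p$ a leaf and set $F(p)$ to the common value of $\Phi$ on $[p]$; at an undecided $p$, choose a coordinate $a_p\notin{\rm dom}(p)$ that is \emph{relevant} to the canonical extension $\gamma_p$ of $p$ (extend $p$ by a fixed default value on all unqueried coordinates), where $a$ is relevant at a total $\gamma$ when $\Phi(\gamma)\neq\Phi(\gamma[a\mapsto b])$ for some $b$; then branch over all values at $a_p$. Since $p$ is undecided, $\gamma_p$ has a modulus not contained in ${\rm dom}(p)$, so a relevant $a_p$ exists, and correctness at the leaves is immediate. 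The engine of the termination proof is the following lower-semicontinuity of the modulus: \emph{if $a$ is relevant at $\gamma$ and $\gamma_n\to\gamma$ pointwise, then $a$ is relevant at all large $\gamma_n$}. Indeed, continuity gives finite neighbourhoods of $\gamma$ and of $\gamma[a\mapsto b]$ on which $\Phi$ is constant; for large $n$ both $\gamma_n$ and $\gamma_n[a\mapsto b]$ fall into them, so $\Phi(\gamma_n)=\Phi(\gamma)\neq\Phi(\gamma[a\mapsto b])=\Phi(\gamma_n[a\mapsto b])$.

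The hard part is well-foundedness. Suppose a branch were infinite, with queried coordinates $a_0,a_1,\dots$, limit partial function $\alpha_0$, and canonical total limit $\gamma_\infty$ of the $\gamma_{p_i}$; let $F_\infty$ be a modulus at $\gamma_\infty$. Lower-semicontinuity shows every coordinate of $F_\infty$ is relevant at $\gamma_{p_i}$ for all large $i$; a symmetric use of continuity shows, conversely, that any coordinate relevant at infinitely many $\gamma_{p_i}$ lies in $F_\infty$. Hence only the finitely many coordinates in $F_\infty$ are relevant infinitely often, and once the strategy has queried all of them the current node $p_i$ contains a copy of $\gamma_\infty\!\restriction F_\infty$ and is therefore decided, contradicting infiniteness. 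The delicate point, and the one I expect to be the true obstacle for uncountable $A$, is to guarantee that the strategy does query every infinitely-relevant coordinate: a naive ``least relevant coordinate'' rule can be blocked forever by a stream of distinct transient coordinates, and since $A$ cannot be enumerated one cannot simply sweep through it as in the countable case (where querying coordinates in a fixed order turns the deciding sets into a bar and makes the tree of undecided initial segments well-founded outright). I would therefore impose a fairness/aging discipline on the choice of $a_p$ so that any coordinate recurring in the moduli is eventually queried, and verify that this discipline is definable from the node $p$ alone.

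For the second assertion, fix $c_0\in A$ and let $U=\{\alpha\in A^A:\alpha(a)=c_0\text{ for some }a\}$, which is open, and let $\Phi$ be the partial function equal to $c_0$ on $U$ and undefined elsewhere; it is continuous with open domain $U$. I claim $\Phi$ is not of the form $\Phi_{T,F}$. For suppose it were, and follow the constant function $\alpha\equiv d_0$ with $d_0\neq c_0$ through $T$: since $\alpha\notin U$ its path reaches no leaf, hence is infinite and queries only a countable set of coordinates $\{a_0,a_1,\dots\}$. As $A$ is uncountable there is a coordinate $a^{*}$ outside this set; let $\alpha'$ agree with $\alpha$ except $\alpha'(a^{*})=c_0$. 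Then $\alpha'\in U$, yet $\alpha'$ agrees with $\alpha$ at every queried coordinate, so it traces the same infinite path and $\Phi_{T,F}(\alpha')$ is undefined, contradicting $\alpha'\in{\rm dom}(\Phi)$.
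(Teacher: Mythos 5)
Your counterexample for the uncountable case is correct and is essentially the paper's own (the paper splits $A$ into disjoint nonempty $A_0,A_1$ and takes the domain to be the set of $\alpha$ hitting $A_0$; your choice is the special case $A_0=\{c_0\}$). The first half, however, has a genuine gap --- in fact two. First, the step ``since $p$ is undecided, a relevant $a_p\notin{\rm dom}(p)$ exists'' is false for your notion of relevance (single-coordinate sensitivity at the one point $\gamma_p$): take $0,1\in A$, let $\Phi(\alpha)=1$ if $\alpha(a_1)=\alpha(a_2)=0$ and $\Phi(\alpha)=0$ otherwise, and use default value $1$; then the root is undecided but no single coordinate is relevant at $\gamma_\emptyset\equiv 1$, so your greedy construction cannot even start. (Relevance ``somewhere on the neighbourhood of $p$'' rather than ``at $\gamma_p$'' would restore existence, but then your semicontinuity lemma no longer applies as stated.) Second, and more seriously, well-foundedness rests on a ``fairness/aging discipline'' that you never construct; as you yourself acknowledge, this is exactly where uncountability bites, so the argument is incomplete at its central point.

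The paper sidesteps the entire relevance analysis with a small compactness claim. Fix a base $\mathcal{B}$ for $\Phi$: a set of finite functions $p$ on each of whose basic neighbourhoods $\Phi$ is constant, such that every $\alpha$ extends some $p\in\mathcal{B}$. The key Claim is that for every finite $s$, either $\Phi$ is already constant on the neighbourhood of $s$, or there is a single \emph{finite} set $C\subseteq A-{\rm dom}(s)$ such that ${\rm dom}(p)$ meets $C$ for every $p\in\mathcal{B}$ compatible with $s$; for if no such $C$ existed, any two base elements compatible with $s$ could be linked through a third whose new domain is disjoint from both, forcing $\Phi$ to be constant on the neighbourhood of $s$. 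The tree then queries all of $C$ (in some order) below an undecided node, which increases by at least one the size of the overlap of the node with every compatible base element; since base elements are finite, every branch reaches a decided node after finitely many rounds, and no fairness bookkeeping is needed. I recommend either adopting this route or isolating and proving a precise finite-blocking-set lemma of this kind that your strategy can exhaust at each node; without one, the first assertion is not proved.
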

\begin{proof} Let $\Phi :A^A\to A$ be continuous. For a finite function $p:A'\to A$ with $A'\subset A$, let $U_p\, =\, \{ \alpha\in A^A\, |\, p\subset\alpha \}$ be the open neighborhood determined by $p$. By continuity, the function $\Phi$ has a {\em base\/} $\cal B$, that is: a set of finite functions $p$ such that $\Phi$ is constant on $U_p$ and such that for every $\alpha\in A^A$ there is a $p\in {\cal B}$ such that $p\subset\alpha$.

Call two finite functions $s$ and $t$ {\em compatible\/} if $s\cup t$ is a function (i.e., $s(a)=t(a)$ whenever $a\in {\rm dom}(s)\cap {\rm dom}(t)$). For arbitrary finite $s$, let ${\cal B}_s$ be the set of those $p\in {\cal B}$ such that $p$ and $s$ are compatible.
\medskip

\noindent {\sl Claim} For any finite function $s$, either $\Phi$ is constant on $U_s$ or there is a finite subset $C$ of $A-{\rm dom}(s)$ such that for every $p\in {\cal B}_s$, ${\rm dom}(p)$ meets $C$.

\noindent {\sl Proof of Claim}: Suppose there is no such $C$; then given any two elements $p,q$ of ${\cal B}_s$ we can find $r\in {\cal B}_s$ such that ${\rm dom}(r)-{\rm dom}(s)$ is disjoint from $({\rm dom}(p)\cup {\rm dom}(q))-{\rm dom}(s)$. Then $r$ is compatible both with $p$ and with $q$, and since $\Phi$ is constant on $U_p$, $U_q$ and $U_r$, $\Phi$ takes the same values on $U_p$ and $U_q$. We conclude that $\Phi$ is constant on $U_s$.
\medskip

We now build a sequential tree for $\Phi$ as follows: $T$ will be the union of a sequence $T_0\subseteq T_1\subseteq\cdots$ of well-founded sequential trees.

Let $T_0$ consist of only the empty function. Suppose $T_n$ has been defined. For every leaf $s$ of $T_n$ define the set of elements of $T_{n+1}$ extending $s$ as follows: if $\Phi$ is constant on $U_s$, this set is empty (and $s$ is also a leaf of $T_{n+1}$). Otherwise, pick a finite set $C$ for $s$ as in the {\sl Claim}; and order $C$ as $\{ c_1,\ldots ,c_n\}$. Then add for each $k$, $1\leq k\leq m$, all functions extending $s$ whose domain is ${\rm dom}(s)\cup\{ c_1,\ldots ,c_k\}$.

Each $T_n$ is clearly a well-founded sequential tree, by induction; and by construction the following is true: if $p\in {\cal B}$ and $p$ is compatible with a leaf $s$ of $T_n$, then either $\Phi$ is constant on $U_s$ (in which case $s$ is a leaf of $T$), or the cardinality of $s\cap p$ is at least $n$. Hence, since the sets $\{ U_p\, |\, p\in {\cal B}\}$ cover $A^A$, the tree $T$ is well-founded. Define for any leaf $s$ of $T$, $F(s)=\Phi (\alpha )$ for an arbitrary $\alpha\in U_s$. Then $\Phi = \Phi _{T,F}$.
\medskip

\noindent For the second statement, split $A$ in two disjoint, nonempty subsets $A_0$, $A_1$. Let $\Phi$ be a partial function which is constant on its domain, and which is only defined on those $\alpha$ for which there is an $a\in A$ with $\alpha (a)\in A_0$. Then $\Phi$ cannot be partial sequential: suppose it is; let $T$ be a sequential tree for it. Choose $\alpha$ such that $\alpha (a)\not\in A_0$ for each $a\in A$. Then the path through $T$ determined by $\alpha$ must be infinite by assumption on $T$, but the union of this path is a partial function on $A$ with countable domain $A'$. But then any function $\alpha '$ which agrees with $\alpha$ on $A'$ but has $\alpha '(a)\in A_0$ for some $a\not\in A'$, will determine the same infinite path, although $\Phi (\alpha ')$ should be defined.\end{proof}
\medskip

\noindent Partial sequential functions $A^A\to A$ can be coded by elements of $A^A$ in the following way. Let $A^*$ be the free monoid on $A$, i.e.\ the set of finite sequences of elements of $A$. Fix an injective function
$$(a_0,\ldots ,a_{n-1})\;\mapsto\; \langle a_0,\ldots ,a_{n-1}\rangle\; :A^*\to A$$
The elements in the image of this map are called {\em coded sequences}.
Let $q$ and $r$ (for `query' and `result') be two specified, distinct elements of $A$. With these data we define a partial operation $\varphi :A^A\times A^A\to A$ as follows.

For $\alpha ,\beta\in A^A$ and $u=\langle a_0,\ldots ,a_{n-1}\rangle$ a coded sequence, call $u$ an {\em interrogation\/} of $\beta$ by $\alpha$, if for each $j\leq n-1$ there is an $a\in A$ such that $\alpha (\langle a_0,\ldots ,a_{j-1}\rangle )=\langle q,a\rangle$ and $\beta (a)=a_j$. Of course, for $j=0$ this means that $\alpha (\langle\,\rangle )=\langle q,a\rangle$ and $\beta (a)=a_0$. The elements $\langle q,a\rangle$ are called the {\em queries\/} of the interrogation.\footnote{In \cite{OostenJ:casfft}, I called these ``dialogues''. Now I find they are far too one-sided for that name}

Note that $\alpha$ and $\beta$ uniquely determine a sequence of interrogations (the {\em interrogation process}) which may be finite or infinite. We shall apply the notion of interrogation also to finite functions.

We say that $\varphi (\alpha ,\beta )$ {\em is defined with value\/} $b$, if there is an interrogation $u$ of $\beta$ by $\alpha$ such that $\alpha (u)=\langle r,b\rangle$. We call the element $\langle r,b\rangle$ the {\em result\/} of the interrogation $u$.

Write $\varphi _{\alpha}$ for the partial function $\beta\mapsto\varphi (\alpha ,\beta ): A^A\to A$.
\begin{proposition}\label{seqcoding} A partial function $A^A\to A$ is of the form $\varphi _{\alpha}$ for some $\alpha\in A^A$, precisely when it is sequential.\end{proposition}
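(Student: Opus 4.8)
The plan is to establish the two implications separately, in each case exploiting the evident dictionary between the combinatorial data of an interrogation and the data of a total sequential tree: the coded sequence $u=\langle a_0,\ldots ,a_{n-1}\rangle$ recording the successive \emph{answers} of $\beta$ plays the role of a node of the tree, the \emph{queries} $\langle q,a\rangle$ issued by $\alpha$ play the role of the distinguished fresh elements at which a sequential tree branches, and a result $\langle r,b\rangle$ corresponds to a leaf labelled $b$. Under this dictionary, termination of the interrogation process should match arrival of the path in a leaf, and divergence should match an infinite path.

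First I would treat the implication that every sequential $\Phi =\Phi _{T,F}$ is of the form $\varphi _\alpha$. Given a total sequential tree $T$ with leaf--labelling $F$, I would define $\alpha$ directly on each coded sequence $u=\langle c_0,\ldots ,c_{j-1}\rangle$ (putting an arbitrary junk value on all other inputs) by replaying $c_0,c_1,\ldots$ as answers: starting at the root, from each non--leaf node $p_i$ already reached one follows the immediate successor $p_{i+1}$ with ${\rm dom}(p_{i+1})={\rm dom}(p_i)\cup\{a_i\}$ and $p_{i+1}(a_i)=c_i$, where $a_i$ is the fresh element of $p_i$ (this successor exists because $T$ is total). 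If after $j$ steps one reaches a non--leaf $p_j$ with fresh element $a_j$, put $\alpha (u)=\langle q,a_j\rangle$; if one reaches a leaf $v$, put $\alpha (u)=\langle r,F(v)\rangle$. A routine induction then shows that for any $\beta$ the interrogation process of $\beta$ by this $\alpha$ traces exactly the path of $\beta$ through $T$, querying the successive fresh elements $a_i$ and reading the answers $\beta (a_i)$; hence $\varphi _\alpha (\beta )$ is defined iff that path meets a leaf $v$, with value $F(v)=\Phi _{T,F}(\beta )$.

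For the converse — every $\varphi _\alpha$ is sequential — I would build a total sequential tree $T$ and a labelling $F$ by simulating $\alpha$. The nodes will be the finite functions recording the answers to \emph{fresh} queries only. At a node $p$ one simulates $\alpha$ on the accumulated coded sequence of answers: while $\alpha$ returns a query $\langle q,a\rangle$ with $a\in{\rm dom}(p)$ the answer is forced to be $p(a)$, so one appends $p(a)$ and continues (``fast--forwarding'' over re--queries); one stops the moment $\alpha$ returns either a fresh query $\langle q,a\rangle$ with $a\notin{\rm dom}(p)$ — in which case $a$ is declared the distinguished element of $p$ and the immediate successors of $p$ are all $p\cup\{(a,c)\}$ with $c\in A$, making the tree total — or a result $\langle r,b\rangle$, in which case $p$ is a leaf with $F(p)=b$. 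Since re--queries to any $\beta\supseteq p$ return the recorded values, the path of $\beta$ through $T$ reproduces the interrogation process, so $\varphi _\alpha (\beta )$ and $\Phi _{T,F}(\beta )$ agree whenever either is defined.

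The main obstacle is exactly the behaviour of this fast--forwarding step, which need not terminate: from $p$, $\alpha$ may re--query elements of ${\rm dom}(p)$ forever, or may return some element that is neither a query nor a result. In either case $\varphi _\alpha (\beta )$ is undefined for every $\beta\supseteq p$, and this must be reflected by an \emph{infinite} path in $T$; but the accumulated finite function $p$ has stopped growing, so $p$ threatens to become a dead leaf with no legitimate $F$--value, contradicting totality of $F$ on leaves. I would resolve this by grafting onto each such ``diverging'' node an infinite total sequential subtree with no leaves at all: choose any fresh element as distinguished element of $p$, declare every $p\cup\{(a,c)\}$ to be diverging as well, and iterate. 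Every $\beta$ passing through $p$ then has an infinite path, so $\Phi _{T,F}(\beta )$ is correctly undefined. Checking that the resulting $T$ is a genuine total sequential tree and that $\varphi _\alpha =\Phi _{T,F}$ on the nose then completes the proof.
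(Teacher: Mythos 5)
Your proposal is correct and follows essentially the same route as the paper: in one direction you read off $\alpha$ from the value-sequences along paths of the tree, and in the other you take as nodes the finite functions whose values are exhausted by some interrogation, then repair the divergent leaves (infinite re-querying or a non-query, non-result value) by grafting on an infinite leafless total subtree. The paper's proof differs only cosmetically, describing that tree declaratively rather than by simulation.
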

\begin{proof} Suppose we have a partial sequential function $\Phi _{T,F}$. Then for any $s\in T$, the sequential tree structure of $T$ induces a linear order on ${\rm dom}(s)$, say ${\rm dom}(s)=\{ c_0,\ldots ,c_{n-1}\}$ (and the predecessors of $s$ in $T$ are restrictions of $s$ to subsets $\{ c_0,\ldots ,c_{j-1}\}$). Let $a_i=s(c_i)$. Let $\alpha$ be any function $A\to A$ such that for each $s\in T$, with $c_i,a_i$ as above, $\alpha (\langle a_0,\ldots ,a_{n-1}\rangle )=\langle r,F(s)\rangle$ if $s$ is a leaf of $T$, and $\alpha (\langle a_0,\ldots ,a_{n-1}\rangle )=\langle q,c_n\rangle$ if $c_n$ is the unique element of ${\rm dom}(t)-{\rm dom}(s)$ for each immediate successor $t$ of $s$ in $T$. Clearly then, $\varphi _{\alpha}=\Phi _{T,F}$.

Conversely, given $\alpha$, let $T_0$ be the set of all finite functions $s$ such that there exists an interrogation of $s$ by $\alpha$ which contains all the values of $s$. It is easy to see that $T_0$ is a total sequential tree. Let us look at the leaves of $T_0$. If $s$ is such a leaf, we have the following 3 possibilities:\begin{arlist}
\item the interrogation process of $s$ by $\alpha$ is infinite: $\alpha$ continues to ask for information it has already received;
\item for some interrogation $u$ of $s$ by $\alpha$, $\alpha (u)$ is neither a query nor a result;
\item for some interrogation $u$ of $s$ by $\alpha$, $\alpha (u)=\langle r,b\rangle$ for some $b$.\end{arlist}
Let $T$ result from $T_0$ by the following: for every leaf $s$ of $T_0$ for which 1) or 2) holds, choose an injective function $(a_0,a_1,\ldots )$ of $\mathbb{N}$ into $A-{\rm dom}(s)$, and add all finite functions of the form $s\cup t$ for which ${\rm dom}(t)=\{ a_0,\ldots ,a_k\}$ for some $k\geq 0$.

Finally, if $s$ is a leaf of $T$ (hence a leaf of $T_0$ to which 3) applies), define $F(s)=b$ if $\alpha (u)=\langle r,b\rangle$ for the shortest interrogation $u$ of $s$ by $\alpha$ yielding a result. Then $\varphi _{\alpha}=\Phi _{T,F}$.\end{proof}
\section{A partial combinatory algebra structure on $A^A$}\label{pcasection}
Our aim is to prove that with the partial map $\alpha ,\beta\mapsto\alpha\beta$ defined below in definition~\ref{applicationdef}, the set $A^A$ has the structure of a {\em partial combinatory algebra}. Let us first recall what this means:
\begin{definition}\label{pcadef}\em A {\em partial combinatory algebra\/} is a set $X$ together with a partial function $X\times X\to X$, written $x,y\mapsto xy$, such that there exist elements $k$ and $s$ in $X$ satisfying the two axioms:\begin{itemize}
\item[$(k)$] For any $x,y\in X$, $kx$ and $(kx)y$ are defined, and $(kx)y=x$;
\item[$(s)$] For any $x,y,z\in X$, $sx$ and $(sx)y$ are defined, and $((sx)y)z$ is defined precisely if $(xz)(yz)$ is, and these expressions define the same element of $X$ if defined.\end{itemize}
The partial function $a,b\mapsto ab$ is called the {\em application function}.\end{definition}
\begin{definition}\label{applicationdef}\em For $\alpha ,\beta\in A^A$ and $a\in A$, call $u=\langle a_0,\ldots ,a_{n-1}\rangle$ an $a$-{\em interrogation\/} of $\beta$ by $\alpha$, if for each $j\leq n-1$, there is a $b\in A$ such that $\alpha (\langle a,a_0,\ldots ,a_{j-1}\rangle )=\langle q,b\rangle$ and $\beta (b)=a_j$.

We say that $\varphi ^a(\alpha ,\beta )$ is defined with value $c$, if for some $a$-interrogation $u$ of $\beta$ by $\alpha$, $\alpha (u)=\langle r,c\rangle$.

Then, define a partial function $A^A\to A$, denoted $\alpha\beta\mapsto \alpha\beta$ in the following way: $\alpha\beta$ is defined if and only if for every $a\in A$, $\varphi ^a(\alpha ,\beta )$ is defined. In that case, $\alpha\beta$ is the function $a\mapsto\varphi ^a(\alpha ,\beta )$.\end{definition}
The following proposition is straightforward.
\begin{proposition}\label{appl=seq}For any $\alpha$ and $a$, the partial function $\beta\mapsto\varphi ^a(\alpha ,\beta )$ is sequential.

Conversely, suppose that for every $a\in A$ we are given a partial sequential function $F_a:A^A\to A$. Then there is an element $\alpha _F$ of $A^A$ such that for all $\beta\in A^A$ and all $a\in A$, $\varphi ^a(\alpha ,\beta )$ is defined if and only if $F_a(\beta )$ is, and equal to it in that case. Hence if for all $a$ $F_a(\beta )$ is defined, $\alpha\beta$ is defined and for all $a$, $\alpha\beta (a)=F_a(\beta )$.\end{proposition}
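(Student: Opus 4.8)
The plan is to reduce both halves of the proposition to Proposition~\ref{seqcoding}, which characterizes the sequential partial functions as exactly those of the form $\varphi_\gamma$. The guiding observation is that the notion of $a$-interrogation is nothing but the ordinary notion of interrogation applied to a ``shifted'' function: an $a$-interrogation of $\beta$ by $\alpha$ probes $\alpha$ only on sequences prefixed by the fixed symbol $a$, so it coincides with an ordinary interrogation of $\beta$ by the function that reads off $\alpha$ at those prefixed sequences.

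For the first statement I would fix $\alpha$ and $a$ and introduce $\gamma\in A^A$ defined on coded sequences by $\gamma(\langle a_0,\ldots ,a_{n-1}\rangle )=\alpha(\langle a,a_0,\ldots ,a_{n-1}\rangle )$, and arbitrarily on the empty sequence and on elements not in the image of the coding. Injectivity of the coding guarantees this is unambiguous, since every coded sequence has a unique underlying tuple to which $a$ may be prepended. I would then check straight from the definitions that a coded sequence $u$ is an $a$-interrogation of $\beta$ by $\alpha$ exactly when it is an ordinary interrogation of $\beta$ by $\gamma$, and that the two clauses ``defined with value $c$'' agree. Hence $\beta\mapsto\varphi^a(\alpha ,\beta )$ is literally $\varphi_\gamma$, which is sequential by Proposition~\ref{seqcoding}.

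For the converse the idea is to let the first coordinate of a query sequence act as a selector choosing which of the given functions to simulate. By Proposition~\ref{seqcoding} each $F_a$ equals $\varphi_{\gamma_a}$ for some $\gamma_a\in A^A$. I would then define $\alpha_F$ on a nonempty coded sequence by $\alpha_F(\langle a,a_0,\ldots ,a_{n-1}\rangle )=\gamma_a(\langle a_0,\ldots ,a_{n-1}\rangle )$, and arbitrarily elsewhere. Since coded sequences with distinct first symbols are themselves distinct, the simulations of the various $\gamma_a$ never interfere, and by the correspondence established in the first part the $a$-interrogations of $\beta$ by $\alpha_F$ are precisely the interrogations of $\beta$ by $\gamma_a$. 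Therefore $\varphi^a(\alpha_F,\beta )$ is defined iff $\varphi_{\gamma_a}(\beta )=F_a(\beta )$ is, with the same value. The final clause is then immediate from Definition~\ref{applicationdef}: if every $F_a(\beta )$ is defined then every $\varphi^a(\alpha_F,\beta )$ is, so $\alpha_F\beta$ is defined and sends each $a$ to $\varphi^a(\alpha_F,\beta )=F_a(\beta )$.

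I do not expect a serious obstacle, as the proposition is essentially a repackaging of the coding lemma; the work is bookkeeping rather than mathematics. The two points needing care are the well-definedness of the shift/dispatch construction---one must use injectivity of $\langle\cdot\rangle$ to recover the first symbol and tail of a coded sequence, and observe that the arbitrarily-assigned values at non-coded elements are never consulted during an interrogation---and the step-by-step matching of $a$-interrogations by $\alpha_F$ with interrogations by $\gamma_a$, which is a routine induction on the length of the interrogation.
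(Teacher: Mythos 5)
Your reduction of both halves to Proposition~\ref{seqcoding}, via the shift $\gamma(v)=\alpha(\langle a\rangle\ast v)$ and the dispatch $\alpha_F(\langle a\rangle\ast v)=\gamma_a(v)$, is correct and is surely the ``straightforward'' argument the paper has in mind (the paper prints no proof of this proposition at all). One slip needs repairing: you say $\gamma$ may be chosen arbitrarily on the empty sequence, but $\langle\,\rangle$ is itself a coded sequence and your defining clause must apply to it with $n=0$, giving $\gamma(\langle\,\rangle)=\alpha(\langle a\rangle)$. This value cannot be arbitrary: it determines the first query of the interrogation of $\beta$ by $\gamma$ (and the result of $\varphi^a(\alpha,\beta)$ in the case where $\alpha$ answers without querying $\beta$ at all), so an arbitrary choice would destroy the identity $\varphi_\gamma=\varphi^a(\alpha,\cdot)$. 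In the converse direction your clause $\alpha_F(\langle a,a_0,\ldots,a_{n-1}\rangle)=\gamma_a(\langle a_0,\ldots,a_{n-1}\rangle)$ does correctly include the case $n=0$, so no change is needed there. Finally, note that your identification of the two ``defined with value $c$'' clauses tacitly reads the result of an $a$-interrogation $u$ off from $\alpha(\langle a\rangle\ast u)$ rather than from $\alpha(u)$ as Definition~\ref{applicationdef} literally states; that is the reading the paper itself uses later (e.g.\ in the proof of Theorem~\ref{factortheorem}), so you are right to adopt it, but it is worth saying explicitly.
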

We shall have to deal with sequential functions of more than one variable.
\begin{definition}\label{biseqdef}\em Let $T$ be a set of pairs of finite functions $(s,t)$, ordered by pairwise inclusion. We say that $T$ is a {\em bisequential tree\/} if $T$ is a tree, and for any non-leaf $(s,t)\in T$ we have: {\em either\/} there is $a\not\in {\rm dom}(s)$ such that the set of immediate successors of $(s,t)$ in $T$ is the set of finite functions $(s',t)$ where $s'$ extends $s$ and ${\rm dom}(s')={\rm dom}(s)\cup\{ a\}$, {\em or\/} there is $b\not\in {\rm dom}(t)$ such that the set of immediate successors is the set of $(s,t')$ with ${\rm dom}(t')={\rm dom}(t)\cup\{ b\}$.

Any pair of functions $A\to A$ determines a unique path through a bisequential tree, and just as for the case of one variable we say that a partial function $\phi :A^A\times A^A\to A$ is bisequential if there is a bisequential tree $T$ and a function $F$ from the leaves of $T$ to $A$, such that $\phi =\Phi _{T,F}$.
\end{definition}
\begin{lemma}\label{bisequential=representable} Let $G$ be a total bisequential function $A^A\times A^A\to A$. Then there is an element $\phi _G$ of $A^A$ such that for all $\alpha$ and $\beta$, $\phi _G\alpha$ is defined, and $\varphi (\phi _G\alpha ,\beta ) =G(\alpha ,\beta )$.\end{lemma}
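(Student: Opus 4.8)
The plan is to represent, for each fixed $\alpha$, the one-variable section $G_\alpha:=(\beta\mapsto G(\alpha,\beta))$ as $\varphi_{\gamma_\alpha}$ for a suitable code $\gamma_\alpha\in A^A$, and then to produce $\phi_G$ via Proposition~\ref{appl=seq} so that $\phi_G\alpha=\gamma_\alpha$ for every $\alpha$. Granting this, the conclusion is immediate: $\varphi(\phi_G\alpha,\beta)=\varphi_{\gamma_\alpha}(\beta)=G_\alpha(\beta)=G(\alpha,\beta)$.

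First I would fix $\alpha$ and collapse the bisequential tree $T$ along $\alpha$. At every $s$-branching node of $T$ (the kind querying the first argument at some point $a$) the value $\alpha(a)$ selects a unique child, so such a node carries no genuine choice; at every $t$-branching node (querying the second argument at some $b$) the full branching over the value of $\beta(b)$ is kept. Deleting the $s$-branchings in this manner yields a total sequential tree $T_\alpha$ for the second variable, with leaf function $F_\alpha$ inherited from $F$, and by construction $\Phi_{T_\alpha,F_\alpha}=G_\alpha$. Since $G$ is total, $T$ is well-founded, hence so is $T_\alpha$, so $G_\alpha$ is a total sequential function. Applying the forward direction of Proposition~\ref{seqcoding} to $T_\alpha$ and $F_\alpha$ gives a code $\gamma_\alpha$ with $\varphi_{\gamma_\alpha}=G_\alpha$.

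The crucial point is that this code can be read off uniformly, so that for each coded sequence $u$ the map $\alpha\mapsto\gamma_\alpha(u)$ is a total sequential function of $\alpha$. Concretely, to compute $\gamma_\alpha(u)$ for $u=\langle a_0,\ldots,a_{n-1}\rangle$ I would trace a single path through $T$ while maintaining a pointer into $u$: at an $s$-branching on $a$, query $\alpha(a)$ and descend accordingly; at a $t$-branching on $b$, if answers remain in $u$ consume the next one and descend, and if $u$ is exhausted halt and output the query $\langle q,b\rangle$; at a leaf $\ell$ of $T$ output the result $\langle r,F(\ell)\rangle$. (On arguments that are not coded sequences one sets $\gamma_\alpha$ to a fixed constant.) Consuming the $t$-answers from $u$ reconstructs exactly the path in $T_\alpha$, and the output at the end is precisely the query or result prescribed by Proposition~\ref{seqcoding}, so this procedure does compute $\gamma_\alpha(u)$. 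It only ever interrogates $\alpha$ at the $s$-branching points, in the order they are met, so it is manifestly sequential in $\alpha$; and because $T$ is well-founded every traced path is finite, so the procedure halts for every $\alpha$. Thus each $F_u:\alpha\mapsto\gamma_\alpha(u)$, extended by the constant on non-sequences, is a total sequential function of the first argument.

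Finally I would feed this family $(F_u)_u$ to Proposition~\ref{appl=seq}: it produces $\phi_G\in A^A$ such that $\varphi^a(\phi_G,\alpha)$ is defined and equal to $F_a(\alpha)=\gamma_\alpha(a)$ for every $a$ and every $\alpha$. Hence, by Definition~\ref{applicationdef}, $\phi_G\alpha$ is defined for all $\alpha$ and $(\phi_G\alpha)(a)=\gamma_\alpha(a)$, i.e.\ $\phi_G\alpha=\gamma_\alpha$, which finishes the argument. The main obstacle is exactly the uniformity established in the third paragraph: one must see that the two kinds of branching in a bisequential tree can be scheduled so that recovering the code value $\gamma_\alpha(u)$ requires only finitely many queries to $\alpha$, made sequentially, with termination guaranteed by well-foundedness of $T$. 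Everything else is routine bookkeeping with the coding conventions of Proposition~\ref{seqcoding} and Definition~\ref{applicationdef}.
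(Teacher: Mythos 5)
Your proposal is correct and follows essentially the same route as the paper: the paper likewise makes $\phi_G\alpha$ into the $\varphi$-code of the section $\beta\mapsto G(\alpha,\beta)$ by defining $\phi_G$ on interrogations $\langle\langle b_0,\ldots,b_{m-1}\rangle,a_0,\ldots,a_{n-1}\rangle$ that trace the path in $T$, querying $\alpha$ at first-coordinate branch points and emitting $\langle r,\langle q,v\rangle\rangle$ or $\langle r,\langle r,F(s,t)\rangle\rangle$ at second-coordinate branch points and leaves. You merely package the same computation through Propositions~\ref{seqcoding} and~\ref{appl=seq} instead of writing out the values of $\phi_G$ explicitly, and your appeal to well-foundedness of $T$ for termination and totality matches the paper's.
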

\begin{proof} Let $G$ be $\Phi _{T,F}$, so $T$ is a bisequential tree, $F$ a function from the leaves of $T$ to $A$, such that $G(\alpha ,\beta )=F((p,q))$ for the unique leaf $(p,q)$ determined by $(\alpha ,\beta )$. Note, that since $G$ is total, the tree $T$ is well-founded. 

Call a non-leaf $(s,t)$ of $T$ a $(0,u)$-point if all immediate successors of $(s,t)$ are of form $(s',t)$ with ${\rm dom}(s')={\rm dom}(s)\cup\{ u\}$; similarly, a $(1,v)$-point has immediate successors $(s,t')$ with ${\rm dom}(t')={\rm dom}(t)\cup\{ v\}$. Suppose $s(u_0)=a_0,\ldots ,s(u_{n-1})=a_{n-1}$, $t(v_0)=b_0,\ldots ,t(v_{m-1}=b_{m-1}$ are the values of $s$ and $t$ in the path, in that order. We define the value of $\phi _G$ on the interrogation $$\langle\langle b_0,\ldots ,b_{m-1}\rangle ,a_0,\ldots ,a_{n-1}\rangle \; =\; \phi _G(\langle\langle\vec{b}\rangle ,\vec{a}\rangle )$$
If $(s,t)$ is a $(0,u)$-point, let $\phi _G(\langle\langle\vec{b}\rangle ,\vec{a}\rangle )=\langle q,u\rangle$; if $(s,t)$ is a $(1,v)$-point, let $\phi _G(\langle\langle\vec{b}\rangle ,\vec{a}\rangle )=\langle r,\langle q,v\rangle\rangle$. Finally, if $(s,t)$ is a leaf, $\phi _G(\langle\langle\vec{b}\rangle ,\vec{a}\rangle )=\langle r,\langle r,F(s,t)\rangle$.

It is then straightforward to verify that for every $(\alpha ,\beta )$ passing through the point $(s,t)$, we have for all $j\leq m-1$ that $(\phi _G\alpha )(\langle b_0,\ldots ,b_{j-1}\rangle )=\langle q,v_j\rangle$, and if $(s,t)$ is a leaf of $T$, we have $(\phi _G\alpha )(\langle b_0,\ldots ,b_{m-1}\rangle )=\langle r, G(\alpha ,\beta )$. Since $T$ is well-founded, it is easy to complete the definition of $\phi _G$ in such a way that $\phi _G\alpha$ is always defined. Then also, $\varphi (\phi _G\alpha ,\beta )=G(\alpha ,\beta )$ as desired. \end{proof}
\begin{corollary}\label{prutcorollary} Suppose for each $a\in A$ a total bisequential function $G_a:A^A\times A^A\to A$ is given. Then there is an element $\phi _G$ of $A^A$ such that for all $\alpha ,\beta\in A^A$ and $a\in A$, $\phi _G\alpha$ is defined, $(\phi _G\alpha )\beta$ is defined and $((\phi _G\alpha )\beta )(a)=G_a(\alpha ,\beta )$.\end{corollary}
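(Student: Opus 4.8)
The plan is to re-run the construction of Lemma~\ref{bisequential=representable} essentially verbatim, but to carry the index $a$ along as an extra prefix, exploiting the fact that in the $a$-interrogations of Definition~\ref{applicationdef} the parameter $a$ already sits at the front of every coded sequence that gets consulted. So I will not introduce anything new; I will thread the Lemma through the double application uniformly in $a$.

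First I would record the two-level reading of the double application. Fix $\alpha,\beta$ and put $\gamma := \phi_G\alpha$. To evaluate $((\phi_G\alpha)\beta)(a)$ one performs, by Definition~\ref{applicationdef}, an $a$-interrogation of $\beta$ by $\gamma$; this consults $\gamma$ only at coordinates of the form $w=\langle a,a_0,\ldots,a_{j-1}\rangle$, where $a_0,\ldots,a_{j-1}$ are the answers $\beta$ has returned so far. In turn, each coordinate value $\gamma(w)=\varphi^w(\phi_G,\alpha)$ is itself produced by a $w$-interrogation of $\alpha$ by $\phi_G$, so that $\phi_G$ is consulted only at sequences $\langle w,\ldots\rangle$ whose tail records the answers $\alpha$ has given. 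Thus the data available to $\phi_G$ at the relevant places is exactly the index $a$, the finite string of $\beta$-answers coded in $w$, and the finite string of $\alpha$-answers coded in the current argument --- which is precisely what is needed to walk down the bisequential tree $T_a$ of $G_a$.

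Accordingly, writing $G_a=\Phi_{T_a,F_a}$ with $T_a$ a well-founded bisequential tree (well-founded because $G_a$ is total, as noted in the proof of Lemma~\ref{bisequential=representable}), I would define $\phi_G$ by the rule of that Lemma applied to $T_a$, relativised to the coordinate block $w=\langle a,\vec b\rangle$. Concretely: given $a$, the $\beta$-answers $\vec b$ coded in $w$, and the $\alpha$-answers $\vec a$ coded in the current argument of $\phi_G$, trace the path of $T_a$ using $\vec b$ at the $(1,v)$-points and $\vec a$ at the $(0,u)$-points. If the node reached is a $(0,u)$-point, set the value to $\langle q,u\rangle$ (a query to $\alpha$ on the inner level); if it is a $(1,v)$-point, set it to $\langle r,\langle q,v\rangle\rangle$ (the inner interrogation returns the query $\langle q,v\rangle$ that the outer $a$-interrogation then poses to $\beta$); and if it is a leaf, set it to $\langle r,\langle r,F_a(s,t)\rangle\rangle$. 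Elsewhere $\phi_G$ may be given arbitrary values.

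It then remains to check, for each $a$, that these two nested interrogation processes dovetail and terminate, and I expect the termination/bookkeeping to be the only delicate point. Because $T_a$ is well-founded, from any node only finitely many $(0,u)$-points (followed along the single $\alpha$-branch) precede the next $(1,v)$-point or leaf, so each inner $w$-interrogation of $\alpha$ halts with a result, whence $\phi_G\alpha$ is total; and the outer $a$-interrogation of $\beta$ descends $T_a$ strictly at every $(1,v)$-point, hence halts at a leaf, where $\gamma$ returns the value $\langle r,G_a(\alpha,\beta)\rangle$. Matching the coded-sequence prefixes on the inner and outer levels against Definition~\ref{applicationdef} then yields that $(\phi_G\alpha)\beta$ is defined with $((\phi_G\alpha)\beta)(a)=G_a(\alpha,\beta)$ for every $a$, as required. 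The construction itself is just that of Lemma~\ref{bisequential=representable} carried out uniformly in the prefix $a$; the genuine work is only in keeping the inner and outer interrogation prefixes aligned while invoking well-foundedness of $T_a$ at both levels.
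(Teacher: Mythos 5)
Your proposal is correct and follows essentially the route the paper intends: its proof of Corollary~\ref{prutcorollary} is just ``straightforward from Propositions~\ref{appl=seq} and~\ref{bisequential=representable}'', i.e.\ run the construction of Lemma~\ref{bisequential=representable} uniformly in the prefix $a$, exactly as you do. The only nuance is that ``elsewhere $\phi_G$ may be given arbitrary values'' should read ``elsewhere $\phi_G$ returns a result $\langle r,\cdot\rangle$'' so that $\phi_G\alpha$ is total, the same completion step the Lemma itself invokes.
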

\begin{proof} Straightforward from propositions~\ref{appl=seq} and \ref{bisequential=representable}.\end{proof}
\medskip

\noindent We can now state the main theorem of this section.
\begin{theorem}\label{A^Apca} For an infinite set $A$, $A^A$, together with the map $(\alpha ,\beta )\mapsto\alpha\beta$, has the structure of a partial combinatory algebra.\end{theorem}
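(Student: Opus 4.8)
The plan is to exhibit the two combinators $k$ and $s$ required by Definition~\ref{pcadef}, in both cases by reducing to Corollary~\ref{prutcorollary}: given a family of total bisequential functions $G_a$, that corollary produces an element $\phi_G$ with $\phi_G\alpha$ and $(\phi_G\alpha)\beta$ always defined and $((\phi_G\alpha)\beta)(a)=G_a(\alpha,\beta)$, and the task in each case is to choose the $G_a$ so that $\phi_G$ is the desired combinator.

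The combinator $k$ is immediate. Put $G_a(\alpha,\beta)=\alpha(a)$; this is total bisequential (query $\alpha$ at $a$, ignore $\beta$, output the answer). Corollary~\ref{prutcorollary} then yields $k$ with $k\alpha$ and $(k\alpha)\beta$ always defined and $((k\alpha)\beta)(a)=\alpha(a)$ for every $a$, i.e. $(k\alpha)\beta=\alpha$, which is axiom $(k)$.

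For $s$ the difficulty is that the axiom constrains the third application $((s\alpha)\beta)\gamma$, whereas Corollary~\ref{prutcorollary} only controls the first two. I would therefore first fix $\alpha,\beta$ and build a single element $\delta_{\alpha,\beta}$, intended to be $(s\alpha)\beta$, required to satisfy $\delta_{\alpha,\beta}\gamma\simeq(\alpha\gamma)(\beta\gamma)$ with matching definedness. The computation of $\varphi^a(\alpha\gamma,\beta\gamma)$ only ever consults $\gamma$: it evaluates $\psi=\alpha\gamma$ and $\chi=\beta\gamma$ at finitely many points, and by Definition~\ref{applicationdef} each such evaluation is itself an interrogation of $\gamma$ (by $\alpha$, resp.\ by $\beta$). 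Interleaving these sub-interrogations in a fixed order exhibits $\gamma\mapsto\varphi^a(\alpha\gamma,\beta\gamma)$ via an explicit sequential tree, hence as a partial sequential function, so Proposition~\ref{appl=seq} furnishes a representing $\delta_{\alpha,\beta}$ with $\delta_{\alpha,\beta}\gamma$ defined iff every coordinate is. To obtain $s$ itself I would then verify that $(\alpha,\beta)\mapsto\delta_{\alpha,\beta}(v)$ is total bisequential for each $v\in A$ and feed the family $G_v(\alpha,\beta)=\delta_{\alpha,\beta}(v)$ into Corollary~\ref{prutcorollary}, which delivers $s$ with $(s\alpha)\beta=\delta_{\alpha,\beta}$ and, as a bonus, $s\alpha$ and $(s\alpha)\beta$ always defined.

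The main obstacle is getting the definedness clause of axiom $(s)$ exactly right, and it has two facets. First, a finite interrogation computing $\varphi^a(\alpha\gamma,\beta\gamma)$ consults $\psi$ and $\chi$ at only finitely many points, so a naive $\delta_{\alpha,\beta}$ could converge even when $\alpha\gamma$ or $\beta\gamma$ fails to be a total element, i.e.\ when $(\alpha\gamma)(\beta\gamma)$ is undefined in the sense of Definition~\ref{pcadef}. I would repair this by building into the computation of coordinate $a$ the extra requirements that $\psi(a)=(\alpha\gamma)(a)$ and $\chi(a)=(\beta\gamma)(a)$ be defined; as $a$ ranges over $A$ this forces $\alpha\gamma$ and $\beta\gamma$ to be total before $\delta_{\alpha,\beta}\gamma$ can converge, without altering any convergent value. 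Second, and more delicate, $\delta_{\alpha,\beta}$ must be a genuine total element of $A^A$ each of whose values is settled from finite information about $\alpha,\beta$, for otherwise the bisequentiality needed for Corollary~\ref{prutcorollary} fails. This can break when a sub-evaluation $\varphi^w(\alpha,\gamma)$ loops on $\gamma$-values already received, querying $\gamma$ afresh. The fix is to pad every step of the simulation with a fresh dummy query to $\gamma$: a divergent sub-computation then becomes an infinite interrogation path, so $\delta_{\alpha,\beta}\gamma$ is correctly undefined, while each value $\delta_{\alpha,\beta}(v)$ is determined after finitely many steps, keeping $(\alpha,\beta)\mapsto\delta_{\alpha,\beta}(v)$ total bisequential. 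With these two adjustments in place, checking the Kleene equality of $((s\alpha)\beta)\gamma$ and $(\alpha\gamma)(\beta\gamma)$, and thereby axiom $(s)$, is routine.
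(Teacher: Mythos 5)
Your proposal follows essentially the same route as the paper: $k$ is obtained exactly as you describe from Corollary~\ref{prutcorollary} applied to $G_a(\alpha ,\beta )=\alpha (a)$, and $s$ is obtained by fixing $\alpha ,\beta$, constructing a single element $S^{\alpha\beta}$ (your $\delta _{\alpha ,\beta}$) that simulates the interrogation of $\beta\gamma$ by $\alpha\gamma$ through interleaved sub-interrogations of $\gamma$ (with dummy queries in the residual cases), checking that $(\alpha ,\beta )\mapsto S^{\alpha\beta}(v)$ is bisequential, and feeding this family into Corollary~\ref{prutcorollary}. Your additional totality check, which forces $\delta _{\alpha ,\beta}\gamma$ to diverge whenever $\alpha\gamma$ or $\beta\gamma$ fails to be total, addresses a definedness point that the paper leaves inside its ``straightforward verification'', but it does not change the structure of the argument.
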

\begin{proof} We have to find elements $k$ and $s$ satisfying $(k)$ and $(s)$ of definition~\ref{pcadef}.

Since for any $a\in A$ the map $(\alpha ,\beta )\to\alpha (a)$ is bisequential, it follows at once from corollary~\ref{prutcorollary} that there is an element $k$ of $A^A$ such that $(k\alpha )\beta =\alpha$.

For $s$, we have to do a bit more work. Let $\alpha $, $\beta$ be fixed for the moment. We define a function $S^{\alpha\beta}$ as follows: we define recursively the values of $S^{\alpha\beta}$ on elements of the form 
$$\langle a\rangle\ast u\; =\; \langle a,a_0,\ldots ,a_{m-1}\rangle$$
of which we assume, inductively, that $u=\langle a_0,\ldots ,a_{m-1}\rangle$ is an $a$-interrogation of a finite function $t$ by $S^{\alpha\beta}$.

Assume the interrogation $u$ has length $n$. Determine a maximal sequence
$$(v_0^0,\ldots ,v_0^{n_0-1},b_0,w_0^0,\ldots ,w_0^{m_0-1},c_0,\ldots ,v_j^0,\ldots ,v_j^{n_j-1},b_j,w_j^0,\ldots ,w_j^{m_j-1},c_j,\ldots )$$
of length $\leq n$, such that for any segment
$$(v_j^0,\ldots ,v_j^{n_j-1},b_j,w_j^0,\ldots ,w_j^{m_j-1},c_j)$$
or initial parts of it, the following holds (where applicable):\begin{rlist}
\item $\langle v_j^0,\ldots ,v_j^{n_j-1}\rangle$ is an $\langle a,c_0,\ldots ,c_{j-1}\rangle$-interrogation of $t$ by $\alpha$ with result $\langle q,b_j\rangle$ (so the value of $\alpha$ on this sequence is $\langle r,\langle q,b_j\rangle\rangle$);
\item $\langle w_j^0,\ldots ,w_j^{m_j-1}\rangle$ is a $b_j$-interrogation of $t$ by $\beta$ with result $c_j$.\end{rlist}
We define the value $S^{\alpha\beta} (\langle a\rangle\ast u)$ as follows:\begin{arlist}
\item if the sequence ends in $(v_j^0,\ldots ,v_j^k)$ and
$$\alpha (\langle\langle a,c_0,\ldots ,c_{j-1}\rangle ,v_j^0,\ldots ,v_j^k\rangle )\; =\; \langle q,x\rangle$$
then $S^{\alpha\beta}(\langle a\rangle\ast u)=\langle q,x\rangle$;
\item if the sequence ends in $(b_j,w_j^0,\ldots ,w_j^k)$ and
$$\beta (\langle b_jw_j^0,\ldots ,w_j^k\rangle )\; =\;\langle q,x\rangle$$
then $S^{\alpha\beta}(\langle a\rangle\ast u)=\langle q,x\rangle$;
\item if the sequence ends in $(v_j^0,\ldots ,v_j^{n_j-1})$ and
$$\alpha (\langle\langle a,c_0,\ldots ,c_{j-1}\rangle ,v_j^0,\ldots ,v_j^{n_j-1}\rangle )\; =\;\langle r,\langle r,y\rangle\rangle$$
then $S^{\alpha\beta}(\langle a\rangle\ast u)=\langle r,y\rangle$;
\item in all other cases, $\alpha (\langle a\rangle\ast u)=\langle q,q\rangle$.\end{arlist}
Now it is a matter of straightforward verification that if $\gamma$ is any function extending $t$, and $\alpha\gamma$, $\beta\gamma$ are defined, then the sequence $c_0,\ldots ,c_j$ forms an $a$-interrogation of $\beta\gamma$ by $\alpha\gamma$.

Hence, $(S^{\alpha\beta}\gamma )(a)$ is defined precisely when $((\alpha\gamma )(\beta\gamma ))(a)$ is, and equal to it in that case.

It is also left to you to check by inspection of the definition of $S^{\alpha\beta}$, that for fixed $a$ and $u$, the function
$$(\alpha ,\beta )\mapsto S^{\alpha\beta}(\langle a\rangle\ast u)$$
is bisequential.

By corollary~\ref{prutcorollary} it follows that there is an element $s\in A^A$ such that for all $\alpha$ and $\beta$, $s\alpha$ and $(s\alpha )\beta$ are defined, and $(s\alpha )\beta =S^{\alpha\beta}$. Then by the remarks above, this $s$ satisfies axiom $(s)$ of definition~\ref{pcadef}. We conclude that $A^A$, with the given partial map $(\alpha ,\beta )\mapsto\alpha\beta$, is a partial combinatory algebra, as claimed.

\end{proof}
\medskip

\noindent We shall denote the partial combinatory algebra on $A^A$ by ${\cal K}_2(A)$.
\section{Further analysis of ${\cal K}_2(A)$}\label{furthersection}
In this section we try to analyze the construction of ${\cal K}_2(A)$ a bit, from the point of view of Longley's 2-category PCA of partial combinatory algebras (first defined in \cite{LongleyJ:reatls}; there is also a description in \cite{OostenJ:reaics}).
\medskip

\noindent {\bf Convention}. From now on, when dealing with iterated applications we shall use the familiar convention of `associating to the left': i.e.\ we write $abcd$ instead of $((ab)c)d$.

PCA is a preorder-enriched category. The objects are partial combinatory algebras. Given two such, $A$ and $B$, a 1-cell, or {\em applicative morphism}, from $A$ to $B$ is a total relation $\gamma$ from $A$ to $B$ (we think of $\gamma$ as a function $A\to {\cal P}^*(B)$ into the set of nonempty subsets of $B$), with the property that there exists an element $r\in B$ such that, whenever $a,a'\in A$, $b\in\gamma (a),b'\in\gamma (a')$ and $aa'$ is defined, then $rbb'$ is defined and an element of $\gamma (aa')$. The element $r$ is called a {\em realizer\/} for $\gamma$.

If $\gamma ,\gamma ':A\to B$ are two applicative morphisms, we say $\gamma\preceq\gamma '$ if there is an element $s\in B$ such that for all $a\in A$ and all $b\in\gamma (a)$, $sb$ is defined and an element of $\gamma '(a)$. The element $s$ is said to {\em realize\/} $\gamma\preceq\gamma '$. For two parallel arrows $\gamma ,\gamma ':A\to B$ we write $\gamma\cong\gamma '$ if $\gamma\preceq\gamma '$ and $\gamma '\preceq\gamma$.

It is part of the theory of partial combinatory algebras that every partial combinatory algebra $A$ contains elements $\bot$, $\top$ and $C$ (thought of as `Booleans' and `definition by cases'), satisfying for all $a,b\in A$:
$$C\top ab=a\;\;\text{and}\;\; C\bot ab=b$$
Instead of $Cvab$ we write ${\sf If}\, v\, {\sf then}\, a\, {\sf else}\, b$.

Suppose $\gamma :A\to B$ is an applicative morphism and $\top _A,\bot _A$ are Booleans in $A$, $\top _B,\bot _B$ are Booleans in $B$. We call the morphism $\gamma$ {\em decidable\/} if there is an element $d\in B$ (the {\em decider\/} for $\gamma$) such that for all $b\in\gamma (\top _A)$, $db=\top _B$ and for all $b\in \gamma (\bot _A)$, $db=\bot _B$. There is a subcategory of PCA on the decidable applicative morphisms.

One further definition: if $\gamma :A\to B$ is an applicative morphism and $f$ is a partial function $A\to A$, then $f$ is said to be {\em representable\/} w.r.t.\ $\gamma$, if there is an element $r_f\in B$ (which then {\em represents\/} $f$), such that for all $a\in {\rm dom}(f)$ and all $b\in\gamma (a)$, $r_fb$ is defined and an element of $\gamma (f(a))$.
\begin{proposition}\label{AtoA^A} For $a\in A$ let $\hat{a}$ denote the constant function with value $a$. For any partial combinatory algebra structure on $A$, the map $\gamma (a)=\{\hat{a}\}$ defines a decidable applicative morphism $A\to {\cal K}_2(A)$. Every total function $A\to A$ is representable w.r.t.\ $\gamma$.\end{proposition}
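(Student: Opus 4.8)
The plan is to verify three things in turn: that $\gamma$ is an applicative morphism, that it is decidable, and that every total function is representable. The common thread is that a constant function $\hat{a}\in A^A$ reveals its value $a$ after a \emph{single} query (an interrogation that asks for the value of $\hat{a}$ at any fixed point $p_0\in A$ returns $a$), so that all the required elements of ${\cal K}_2(A)$ can be built as ``query the constant once, then do the work in $A$'' programs. To produce these elements I intend to use Proposition~\ref{appl=seq}: it suffices to exhibit, for each index $c\in A$, a suitable (partial) sequential function of the argument(s), and the proposition then delivers an element of $A^A$ with the prescribed interrogation behaviour.

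The substantive part is the existence of a realizer $r$. I must find $r\in A^A$ with $r\hat{a}\hat{a'}=\widehat{aa'}$ whenever $aa'$ is defined in $A$, where $\widehat{aa'}$ denotes the constant function with value $aa'$. I would build $r$ so that its action matches the two applications. First, for each $a$ I describe an element $\sigma_a\in A^A$ representing ``left multiplication by $a$'': on each $c$-interrogation $\sigma_a$ queries its argument $\beta$ once at $p_0$, reads the value $a'=\beta(p_0)$, and returns the result $aa'$ if $aa'$ is defined in $A$; if $aa'$ is undefined it instead re-issues the query, so that the interrogation process diverges. Thus $\sigma_a$ is a genuine (total) element of $A^A$ whose partial behaviour is carried by non-termination of interrogations rather than by undefined values, and one gets $\sigma_a\hat{a'}=\widehat{aa'}$ exactly when $aa'$ is defined. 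Since $\sigma_a(u)$ is a well-defined element of $A$ for every $u$ and every $a$, the map $\beta\mapsto\sigma_{\beta(p_0)}(u)$ is, for each fixed $u$, total sequential (one query, then output). Feeding this family to Proposition~\ref{appl=seq} yields $r$ with $(r\hat{a})(u)=\sigma_a(u)$ for all $u$, i.e.\ $r\hat{a}=\sigma_a$ is always defined; composing the two stages gives $r\hat{a}\hat{a'}=\sigma_a\hat{a'}=\widehat{aa'}$ whenever $aa'$ is defined, as required. (Alternatively one can simply write $r$ down explicitly, prescribing its values on the relevant coded sequences, and verify the two equations directly.)

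Decidability and representability are then easy and follow the same pattern. For representability of a total $f:A\to A$, I take $r_f\in A^A$ which on each $c$-interrogation queries its argument once at $p_0$, reads $v$, and returns $f(v)$; since $f$ is total this is total sequential, so $r_f$ exists by Proposition~\ref{appl=seq}, $r_f\hat{a}$ is always defined, and $\varphi ^c(r_f,\hat{a})=f(a)$ for every $c$, whence $r_f\hat{a}=\widehat{f(a)}$. For decidability, assuming the Booleans satisfy $\top_A\neq\bot_A$ (the degenerate case being trivial), I use the same ``query once, then branch'' recipe: let $d\in A^A$ on each $c$-interrogation read $v=\beta(p_0)$ and return $\top_B(c)$ if $v=\top_A$ and $\bot_B(c)$ if $v=\bot_A$ (anything otherwise), where $\top_B,\bot_B$ are the chosen Booleans of ${\cal K}_2(A)$. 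Then $d\widehat{\top_A}=\top_B$ and $d\widehat{\bot_A}=\bot_B$, so $d$ is a decider.

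I expect the main obstacle to be the realizer $r$: one must keep the two levels of application straight --- arranging that $r\hat{a}$ is \emph{total} (defined for every $a$) while the second application $(r\hat{a})\hat{a'}$ is defined \emph{exactly} when $aa'$ is, and that the partiality of the application of $A$ is faithfully reproduced by divergence of the interrogation process, since every element of $A^A$ is by definition a total function and so cannot encode undefinedness in its values. The remaining verifications (that the assembled functions are sequential, and that the interrogations unfold as claimed) are routine given Proposition~\ref{appl=seq}.
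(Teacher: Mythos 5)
Your proposal is correct and takes essentially the same approach as the paper: the realizer, the decider and the representing elements all interrogate the constant argument once to extract its value and then finish the computation inside $A$, which is exactly what the paper's explicitly written $\rho$ does (with the ``undefined'' case likewise absorbed into harmless extra values). The only difference is presentational --- you assemble these elements by feeding one-query sequential functions to Proposition~\ref{appl=seq}, whereas the paper writes down the values of $\rho$ on the relevant coded sequences directly, which is precisely your own parenthetical alternative.
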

\begin{proof} This is easy. Let $\rho$ be any element of $A^A$ satisfying:
$$\begin{array}{rcl}
\rho (\langle\langle x\rangle\rangle ) & = & \langle r,\langle q,q\rangle\rangle \\
\rho (\langle\langle x,b\rangle\rangle ) & = & \langle q,q\rangle \\
\rho (\langle\langle x,b\rangle ,a\rangle ) & = & \left\{\begin{array}{cl} \langle r,\langle r,ab\rangle\rangle & \text{if $ab$ is defined in $A$}\\ \langle r,\langle q,q\rangle\rangle & \text{otherwise}\end{array}\right. \\
\rho (\langle n\rangle ) & = & \langle r,r\rangle\text{ if $n\neq\langle x\rangle$ and $n\neq\langle x,b\rangle$}\end{array}$$
Then it is easily verified that if $ab$ is defined in $A$, $\rho\hat{a}$ is defined and $\rho\hat{a}\hat{b}=\widehat{ab}$. Hence, $\gamma$ is an applicative morphism. Furthermore, for any good choice of Booleans $\top ,\bot$ in $A$ one can take $\hat{\top},\hat{\bot}$ for Booleans in $A^A$, so decidability is easy. That every $f:A\to A$ is representable, is left to you.\end{proof}
\medskip

\noindent At this point I wish to collect a few bits of notation and theory of partial combinatory algebras; everything can be found in sections 1.1 and 1.3 of \cite{OostenJ:reaics}. Let $A$ be a partial combinatory algebra.\begin{arlist}
\item If $t$ and $s$ are terms built up from elements of $A$ and the application function, $t\dar$ means ``$t$ is defined'', and $t\simeq s$ means: $t$ is defined if and only if $s$ is, and they denote the same element of $A$ if defined.
\item Given a term $t(x_1,\ldots ,x_{n+1})$ built up from elements of $A$, variables $x_1,\ldots ,x_{n+1}$ and the application function, there is a standard construction for an element $\langle x_1\cdots x_{n+1}\rangle t$ of $A$ which satisfies:
$$\begin{array}{l}(\langle x_1\cdots x_{n+1}\rangle t)a_1\cdots a_n\dar \\
(\langle x_1\cdots x_{n+1}\rangle t)a_1\cdots a_{n+1}\,\simeq\, t(a_1,\ldots ,a_{n+1})\end{array}$$
\item $A$ has elements $p, p_0, p_1$ ({\em pairing\/} and {\em unpairing\/} combinators) such that $pab\dar$, $p_0(pab)=a$ and $p_1(pab)=b$; $A$ contains a copy $\{\bar{0},\bar{1},\ldots\}$ of the natural numbers such that any computable function is represented by an element of $A$; and $A$ has a standard coding of tuples $[\cdot ,\ldots ,\cdot ]$ together with elements representing the basic manipulation of these.
\item A has a {\em fixed-point operator}: an element $z$ such that for all $f,x\in A$: $zf\dar$ and $zfx\simeq f(zf)x$ (this is also referred to as ``the recursion theorem in $A$'').\end{arlist}
It is clear that the construction of ${\cal K}_2(A)$ given above, depends on the coding of tuples $\langle\cdot ,\ldots ,\cdot\rangle$ and the elements $q$ and $r$. Since we wish to study the connection to $A$ in the case $A$ itself has the structure of a partial combinatory algebra, we make the following definition.
\begin{definition}\label{basedonA}\em Suppose $A$ is an infinite set and $(A,\cdot )$ a partial combinatory algebra structure on $A$. We say that ${\cal K}_2(A)$ is {\em based on\/} $(A,\cdot )$ if, in the definition of an interrogation of $\beta$ by $\alpha$, we have used the standard coding $[\cdot ,\ldots ,\cdot ]$ of $A$, $q$ and $r$ are, respectively, the Booleans $\bot$ and $\top$, and the values of $\alpha$ at such interrogations are $p\bot u$ or $p\top u$.

We say that ${\cal K}_2(A)$ is {\em compatible with\/} $(A,\cdot )$ if there are elements $a,b,c\in A$ such that:\begin{rlist}\item for every tuple $u_0,\ldots ,u_{n-1}$, $a(\langle u_0,\ldots ,u_{n-1}\rangle )=[u_0,\ldots ,u_{n-1}]$ and\\ $b([u_0,\ldots ,u_{n-1}])=\langle u_0,\ldots ,u_{n-1}\rangle$;
\item $cq=\bot$ and $cr=\top$.\end{rlist}\end{definition} 
\begin{theorem}\label{factortheorem} Suppose $(A,\cdot )$ is a partial combinatory algebra and ${\cal K}_2(A)$ is based on $(A,\cdot )$. Let $\gamma :A\to {\cal K}_2(A)$ be the applicative morphism of~\ref{AtoA^A}. Then for any decidable applicative morphism $\delta :A\to B$ such that every total function $A\to A$ is representable w.r.t.\ $\delta$, there is a greatest decidable applicative morphism $\varepsilon :{\cal K}_2(A)\to B$ such that $\varepsilon\gamma\cong\delta$. Here, `greatest' and `$\cong$' refer to the preorder on applicative morphisms.\end{theorem}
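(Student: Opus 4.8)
The plan is to define $\varepsilon$ explicitly as the relation assigning to a function its set of $\delta$-representations, and then to check, in turn, that it is an applicative morphism, that it is decidable, that $\varepsilon\gamma\cong\delta$, and that it is greatest. Concretely I would set
$$\varepsilon(\alpha)=\{\,b\in B\mid \forall a\in A\ \forall d\in\delta(a)\ (bd\dar\text{ and }bd\in\delta(\alpha(a)))\,\},$$
the set of elements of $B$ representing the total function $\alpha:A\to A$ with respect to $\delta$. Since every total function $A\to A$ is representable w.r.t.\ $\delta$ by hypothesis, each $\varepsilon(\alpha)$ is nonempty, so $\varepsilon$ is a total relation $\mathcal{K}_2(A)\to B$.

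The crux is to produce a realizer $R\in B$ for $\varepsilon$. Given $b\in\varepsilon(\alpha)$ and $b'\in\varepsilon(\beta)$ with $\alpha\beta\dar$, I need $Rbb'$ to represent the total function $\alpha\beta$, i.e.\ for every $d\in\delta(a)$ to return a $\delta$-representation of $\varphi^a(\alpha,\beta)$. Because $\mathcal{K}_2(A)$ is \emph{based on} $(A,\cdot)$ (Definition~\ref{basedonA}), the entire $a$-interrogation process is expressed through operations definable in $A$: building the coded history $\langle a,a_0,\ldots,a_{j-1}\rangle$, testing whether $\alpha$'s value is a query $p\bot c$ or a result $p\top c$ by taking $p_0$, and extracting $c$ by taking $p_1$. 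Each of these is the application of a fixed element of $A$, so it transports to $B$ by composing the realizer $r_\delta$ of $\delta$ with fixed $B$-representatives of the relevant combinators; the query/result test becomes a genuine branch in $B$ precisely because $q=\bot$, $r=\top$ and $\delta$ is decidable, so feeding $p_0$ of $\alpha$'s value through the decider $d_\delta$ yields a Boolean of $B$ on which I branch with the definition-by-cases combinator. I would then close the recursion with the fixed-point operator of $B$: $Rbb'd$ repeatedly applies $b$ to the $B$-code of the current history, branches, and either applies $b'$ to the queried point and loops, or halts and returns the result. When $\alpha\beta\dar$ the interrogation terminates for every $a$, so the $B$-recursion halts after the same number of steps, and an induction on the length of the interrogation shows the value lies in $\delta(\varphi^a(\alpha,\beta))$. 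This verification is the main obstacle: it is where all three hypotheses (\emph{based on}, decidability, representability) are used at once, and where the interrogation-to-recursion bookkeeping must be done carefully.

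Decidability and $\varepsilon\gamma\cong\delta$ are then comparatively routine. For Booleans in $\mathcal{K}_2(A)$ I take $\hat\top,\hat\bot$ (as in the proof of Proposition~\ref{AtoA^A}); fixing $a_0\in A$ and $e_0\in\delta(a_0)$, for $b\in\varepsilon(\hat\top)$ the element $be_0$ lies in $\delta(\top)$, and $d_\delta(be_0)=\top_B$, so $\langle x\rangle\, d_\delta(xe_0)$ is a decider for $\varepsilon$. For the composite, note $(\varepsilon\gamma)(a)=\varepsilon(\hat a)$ since $\gamma(a)=\{\hat a\}$. The inequality $\delta\preceq\varepsilon\gamma$ is realized by the $k$-combinator of $B$ (from $c\in\delta(a)$ the constant map represents $\hat a$), and $\varepsilon\gamma\preceq\delta$ by evaluation at $e_0$ (from $b\in\varepsilon(\hat a)$ we get $be_0\in\delta(a)$); hence $\varepsilon\gamma\cong\delta$.

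Finally, to show $\varepsilon$ is greatest, let $\varepsilon':\mathcal{K}_2(A)\to B$ be any decidable applicative morphism with $\varepsilon'\gamma\cong\delta$; I must realize $\varepsilon'\preceq\varepsilon$. The idea is that an arbitrary $b'\in\varepsilon'(\alpha)$ already determines $\alpha$ as a function, because $\alpha$ can be evaluated inside $\mathcal{K}_2(A)$. Using Corollary~\ref{prutcorollary} I would fix an element $\mathsf{E}\in\mathcal{K}_2(A)$ with $\mathsf{E}\alpha\hat n=\widehat{\alpha(n)}$ for all $\alpha,n$, obtained from the total bisequential functions $G_a(\alpha,\beta)=\alpha(\beta(c_0))$ for a fixed $c_0\in A$; note $\mathsf{E}\alpha$ and $(\mathsf{E}\alpha)\hat a$ are always defined. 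Then, given $b'\in\varepsilon'(\alpha)$ and $d\in\delta(a)$, I translate $d$ to a $\varepsilon'$-representative of $\hat a$ along the realizer of $\delta\preceq\varepsilon'\gamma$, apply the realizer of $\varepsilon'$ twice — first to a fixed representative of $\mathsf{E}$ and to $b'$, then to that representative of $\hat a$ — landing in $\varepsilon'(\mathsf{E}\alpha\hat a)=\varepsilon'(\widehat{\alpha(a)})=(\varepsilon'\gamma)(\alpha(a))$, and finally translate back along $\varepsilon'\gamma\preceq\delta$ to reach $\delta(\alpha(a))$. All elements used are independent of $\alpha$, $a$ and $b'$, so abstracting yields a single $w\in B$ with $wb'\in\varepsilon(\alpha)$, i.e.\ $\varepsilon'\preceq\varepsilon$. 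Combined with the earlier steps, this shows $\varepsilon$ is the greatest decidable applicative morphism with $\varepsilon\gamma\cong\delta$.
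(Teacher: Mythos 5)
Your proposal is correct and follows essentially the same route as the paper: the same definition of $\varepsilon(\alpha)$ as the set of $\delta$-representations, the same fixed-point recursion in $B$ simulating the interrogation (verified by induction on its length, using that ${\cal K}_2(A)$ is based on $A$ so that queries/results are Booleans testable by the decider), the same treatment of decidability and of $\varepsilon\gamma\cong\delta$, and the same maximality argument via an evaluation element $\sigma$ with $\sigma\alpha\hat{a}=\widehat{\alpha(a)}$. The only difference is that the paper writes out the recursive term $F$ explicitly where you describe it schematically, but the construction and the points where each hypothesis is used coincide.
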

\begin{proof} Given $\delta$, define $\varepsilon$ as follows: $\varepsilon (\alpha )$ is the (nonempty) set of elements $b\in B$ which represent $\alpha$ w.r.t.\ $\delta$.

First we show that $\varepsilon$ is an applicative morphism: we have to construct a realizer for $\varepsilon$. Let $r$ be a realizer for $\delta$ and $d$ a decider for $\delta$. Let $p,p_0,p_1$ in $A$ be the pairing and unpairing combinators, and $[\cdot ,\ldots ,\cdot ]$ the standard coding of tuples in $A$. Choose $\pi\in\delta (p)$ and $\pi _i\in\delta (p_i)$, for $i=0,1$. Let $c$ be an element of $B$ such that, if $u=[u_0,\ldots ,u_{k-1}]$ in $A$ and $y\in A$, $v\in\delta (u)$, $x\in\delta (y)$, then $cvx\in\delta ([u_0,\ldots ,u_{k-1},y])$. Let $s\in B$ be such that if $y\in A$ and $x\in\delta (y)$, then $sx\in\delta ([y])$.

Using the fixed-point combinator in $B$, find $F\in B$ satisfying for all $a,b,v\in B$: $Fab\dar$ and
$$Fabv\simeq\begin{array}{l}\mbox{{\sf If} }d(r\pi _0(av))\mbox{ {\sf then} }r\pi _1(av)\mbox{ {\sf else} }\\
Fab(cv(rb(r\pi _1(av)))) \end{array}$$
Now suppose $a\in\varepsilon (\alpha )$, $b\in\varepsilon (\beta )$.
\medskip

\noindent {\sl Claim}. For any $y\in A$, $x\in\delta (y)$, and any $y$-interrogation $u=[u_0,\ldots ,u_{k-1}]$ of $\beta$ by $\alpha$, there is a $v\in\delta ([y,u_0,\ldots ,u_{k-1}])$ such that $Fab (sx)\simeq Fabv$.
\medskip

\noindent This claim is proved by induction on $k$. For $k=0$, since $sx\in\delta ([y])$ there is nothing to prove.

Suppose the Claim holds for $j\leq k$, and $[u_0,\ldots ,u_{k}]$ is a $y$-interrogation. By induction hypothesis there is a $v\in\delta ([y,u_0,\ldots ,u_{k-1}])$ such that $Fab(sx)\simeq Fabv$. Since $[u_0,\ldots ,u_k]$ is a $y$-interrogation of $\beta$ by $\alpha$, we have $\alpha ([y,u_0,\ldots ,u_{k-1}])=p\bot e$ and $\beta (e)=u_k$, for some $e\in A$. Since $av\in\delta(\alpha ([y,u_0,\ldots ,u_{k-1}]))=\delta (p\bot e)$ we have $r\pi _0(av)\in\delta (\bot )$ so $d(r\pi _0(av))=\bot$ in $B$. By definition of $F$, we have
$$Fab(sx)\simeq Fabv\simeq Fab(cv(rb(r\pi _1(av))))$$
It is easily checked that $cv(rb(r\pi _1(av)))$ is an element of $\delta ([y,u_0,\ldots ,u_k])$. This proves the Claim.
\medskip

\noindent Now if $u=[u_0,\ldots ,u_{k-1}]$ is a $y$-interrogation of $\beta$ by $\alpha$ with result $g$, that is to say $\alpha ([y,u_0,\ldots ,u_{k-1}])=p\top g$, and $v\in\delta ([y,u_0,\ldots ,u_{k-1}])$ is as in the Claim, then by definition of $F$,
$$Fabv=r\pi _1(av)\in\delta (g)$$
We conclude that if $\alpha\beta (y)=g$ then $Fab(sx)\in\delta (g)$; hence, if $\alpha\beta\dar$, then $\langle x\rangle Fab(sx)\in\varepsilon (\alpha\beta )$. Therefore, the element $\rho =\langle abx\rangle Fab(sx)$ is a realizer for $\varepsilon$.
\medskip

\noindent That $\varepsilon$ is decidable follows easily from the fact that $\delta$ is, and the fact that in ${\cal K}_2(A)$ we may take $\hat{\bot}$ and $\hat{\top}$ for the Booleans.
\medskip

\noindent If $b\in B$ is an element of $\varepsilon\gamma (a)$, that is, $b$ represents $\hat{a}$ w.r.t.\ $\delta$, then for any chosen, fixed $\xi\in\bigcup_{a'\in A}\delta (a')$ we have $b\xi\in\delta (a)$ so $\langle b\rangle b\xi$ realizes $\varepsilon\gamma\preceq\delta$; conversely if $b\in\delta (a)$ then the element $\langle x\rangle b\in B$ clearly represents $\hat{a}$ w.r.t.\ $\delta$. So we see that $\varepsilon\gamma\cong\delta$.
\medskip

\noindent In order to see that $\varepsilon $ is the {\em greatest\/} applicative morphism satisfying $\varepsilon\gamma\cong\delta$, suppose $\varepsilon '$ is another one. Suppose $r'$ realizes $\varepsilon '$, $s$ realizes that $\varepsilon '\gamma\preceq\delta$, and $t$ realizes that $\delta\preceq\varepsilon\gamma$. In ${\cal K}_2(A)$ there is an element $\sigma$ such that for all $\alpha\in A^A$ and $a\in A$, $\sigma\alpha\hat{a}=\widehat{\alpha (a)}$ (this is left to the reader). Choose $\tau\in\varepsilon '(\sigma )$.

Let $\alpha\in A^A$ and $a\in A$ be arbitrary. Suppose $z\in\varepsilon '(\alpha )$. If $x\in\delta (a)$ then $tx\in\varepsilon '(\gamma (a))=\varepsilon '(\hat{a})$, so $r'(r'\tau z)(tx)\in\varepsilon '(\widehat{\alpha (a)})=\varepsilon '(\gamma (\alpha (a)))$; hence $s(r'(r'\tau z)(tx))\in\delta (\alpha (a))$.

We conclude that $\langle x\rangle s(r'(r'\tau z)(tx))$ represents $\alpha$ w.r.t.\ $\delta$; in other words, is an element of $\varepsilon (\alpha )$. Therefore, $\langle zx\rangle s(r'(r'\tau z)(tx))$ realizes $\varepsilon '\preceq\varepsilon$, as was to prove.\end{proof}
\medskip

\noindent Of course, theorem~\ref{factortheorem} also works if ${\cal K}_2(A)$ is compatible with $A$.

\subsection{Sub-pcas of ${\cal K}_2(A)$}\label{subpcasubsection}
We now turn our attention to sub-partial combinatory algebras of ${\cal K}_2(A)$: subsets $B\subset A^A$ such that, whenever $\alpha ,\beta\in B$ and $\alpha\beta\dar$ in ${\cal K}_2(A)$, then $\alpha\beta\in B$, and moreover, $B$ with the inherited partial application function is a partial combinatory algebra. For brevity, let's call $B$ a sub-pca of ${\cal K}_2(A)$.

A stronger notion, which is relevant to relative realizability (see \cite{BirkedalL:relmrr,BirkedalL:devttc-entcs}), requires $B$ to contain elements $k$ and $s$ which satisfy the axioms $(k)$ and $(s)$ of \ref{pcadef} both with respect to $B$ and with respect to ${\cal K}_2$. We call such sub-pcas {\em elementary}. Examples of elementary sub-pcas are: the inclusion of Rec in ${\cal K}_2$, where Rec is the set of total recursive functions; or $\Delta _n\subset {\cal K}_2$; or ${\rm RE}\subset {\cal P}(\omega )$, where RE is the set of recursively enumerable subsets of $\mathbb{N}$.

An instrument for studying sub-pcas of ${\cal K}_2(A)$, in the case $A$ has a partial combinatory algebra structure and ${\cal K}_2(A)$ is compatible with $A$) is the preorder $\leq _T$ on partial functions $A\to A$, defined in \cite{OostenJ:genfrr}. There, the following theorem is proved:
\begin{theorem}\label{A[f]} Let $A$ be a partial combinatory algebra and $f:A\to A$ a partial function. There is a partial combinatory algebra $A[f]$ and a decidable applicative morphism $\iota _f:A\to A[f]$ such that $f$ is representable w.r.t.\ $\iota _f$, and moreover any decidable applicative morphism $\gamma :A\to B$ such that $f$ is representable w.r.t.\ $\gamma$, factors uniquely through $\iota _f$.\end{theorem}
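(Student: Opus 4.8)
The plan is to realize $A[f]$ as a partial combinatory algebra of computations relative to the oracle $f$, built with the recursion theorem of $A$ in the style of the $\bullet$-evaluation used for ${\cal K}_2(A)$, but with the single partial function $f$ replacing the second argument. Concretely I would take the underlying set of $A[f]$ to be $A$ itself and introduce a new partial application $\bullet$ as follows. Using the standard coding, the Booleans $\top,\bot$ and the pairing $p$ of $A$, I read a value either as a result $p\top c$ or as a query $p\bot(pek)$, to be understood as ``consult the oracle for $f(e)$ and resume by applying the continuation $k$ to the answer''. With the fixed-point operator $z$ of $A$ I define an evaluator ${\rm ev}$ which returns $c$ on $p\top c$ and recurses on $k(f(e))$ on a query $p\bot(pek)$ with $f(e)\dar$; then I set $\alpha\bullet\beta\simeq{\rm ev}(\alpha\beta)$, the ordinary $A$-application $\alpha\beta$ serving as the initial state. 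Since ${\rm ev}$ consults $f$, the operation $\bullet$ is genuinely new.

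First I would verify that $(A,\bullet)$ is a partial combinatory algebra. A computation that never issues a query simply returns its result state, so the combinators $k,s$ of $A$ lift, through the result-state protocol, to combinators for $\bullet$; threading the query/resume loop correctly through the $(s)$-axiom is the only delicate point, and it is settled by combinatory completeness in $A$ together with the recursion theorem. I would then let $\iota_f$ be the identity relation, $\iota_f(a)=\{a\}$: because $\bullet$ can simulate $A$-application and return the bare result, there is a $\rho\in A[f]$ with $\rho\bullet a\bullet a'=aa'$ whenever $aa'\dar$, so $\iota_f$ is an applicative morphism, and it is decidable since the Booleans of $A$ may be taken as Booleans of $A[f]$, exactly as in Proposition~\ref{AtoA^A}. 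The element $r_f$ representing $f$ is the program that on input $a$ immediately issues the query $p\bot(pa\,{\sf id})$, so that $r_f\bullet a$ evaluates to $f(a)$ for $a\in{\rm dom}(f)$; thus $f$ is representable w.r.t.\ $\iota_f$.

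For the factorization, let $\gamma:A\to B$ be decidable with $f$ representable by $r_f^B\in B$, and define $\bar\gamma:A[f]\to B$ to be $\gamma$ as a relation, $\bar\gamma(\alpha)=\gamma(\alpha)$; this is automatically total. The real work, and the step I expect to be the main obstacle, is to produce a realizer for $\bar\gamma$ as an applicative morphism for the new application $\bullet$. Here I would follow the proof of Theorem~\ref{factortheorem} almost verbatim: using the fixed-point combinator in $B$ I build an element $G$ that simulates ${\rm ev}$ inside $B$, stepping the underlying $A$-computation by a realizer of $\gamma$, testing whether the current state codes a result or a query by means of the decider of $\gamma$ together with fixed representatives of $p_0,p_1$, and resolving each query by feeding the $\gamma$-representative of $e$ to $r_f^B$ before continuing. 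The induction on the number of queries is the analogue of the Claim in that proof; note that, in contrast to Theorem~\ref{factortheorem}, only the representability of the single function $f$ is needed, since the oracle is fixed. Decidability of $\bar\gamma$ is inherited from $\gamma$, and $\bar\gamma\iota_f\cong\gamma$ holds because the underlying relation of $\iota_f$ is the identity, whence $\bar\gamma\iota_f$ and $\gamma$ have the same underlying relation.

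Uniqueness is then short. Since $\iota_f$ has the identity as underlying relation, any decidable $\varepsilon:A[f]\to B$ has the same underlying relation as the composite $\varepsilon\iota_f$; hence if $\varepsilon\iota_f\cong\gamma$ then the underlying relations of $\varepsilon$ and of $\bar\gamma=\gamma$ are $\cong$ as relations $A\to B$. Finally, two applicative morphisms with $\cong$ underlying relations are themselves $\cong$, since one converts representatives by the witnessing elements of $B$; so $\varepsilon\cong\bar\gamma$, which is the required uniqueness.
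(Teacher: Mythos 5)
The paper itself does not prove this theorem --- it quotes it from \cite{OostenJ:genfrr} and only records the construction of $A[f]$. That construction has the same skeleton as yours (underlying set $A$, a new application obtained by letting a program consult the oracle $f$ finitely often, $\iota_f$ the identity relation), but encodes oracle computations differently: a $b$-interrogation of $f$ by $a$ is a coded sequence $[u_0,\ldots,u_{n-1}]$ of \emph{all previous answers}, to which $a$ is re-applied at each step, yielding either a query $[\bot,v]$ or a result $[\top,c]$; your version instead threads an explicit continuation $k$ through each query $p\bot(pek)$. The two encodings are interchangeable, and your continuation-passing one arguably makes the $(s)$-axiom and the simulation step slightly cleaner, since states compose without re-reading a transcript. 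Your factorization and uniqueness arguments are the standard ones: the realizer for $\bar\gamma$ is a $B$-internal simulation of the evaluation loop built from the decider, a realizer of $\gamma$ and $r_f^B$, with an induction on the number of queries exactly as in the Claim of Theorem~\ref{factortheorem}; and uniqueness up to $\cong$ is immediate because $\iota_f$ is the identity relation and the preorder $\preceq$ depends only on underlying relations and on application in $B$.

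Two local corrections are needed. You cannot define ${\rm ev}$ ``with the fixed-point operator $z$ of $A$'': $f$ is an arbitrary partial function, in general not representable in $A$, so ${\rm ev}$ is not an element of $A$ but an external partial function $A\to A$, to be defined by induction on the number of queries (equivalently, as a least fixed point in the poset of partial endofunctions of $A$). If ${\rm ev}$ were internally definable, $f$ itself would be representable in $A$ and the construction would collapse. Second, the representer $r_f$ must issue the query $p\bot(pak)$ with continuation $k=\langle w\rangle p\top w$ rather than $k={\sf id}$: the evaluator recurses on $k(f(a))$, and $f(a)$ is a bare element of $A$, not a result state, so with ${\sf id}$ the evaluation would not return $f(a)$. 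Neither repair affects the architecture of your argument.
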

One can then define, for two partial functions $f,g:A\to A$: $f\leq _T g$ if $f$ is representable w.r.t.\ $\iota _g$. This gives a preorder on the set of partial endofunctions on $A$, which in the case that $A$ is ${\cal K}_1$ (the partial combinatory algebra of indices of partial recursive functions) and $f$ and $g$ are total functions, coincides with Turing reducibility.

Moreover, $A[f]$ is defined as follows. The underlying set is $A$ itself, and one defines a ``$b$-interrogation of $f$ by $a$'' just as in the definition of ${\cal K}_2(A)$ above, but now using application in $A$: i.e. it is a coded sequence $u=[u_0,\ldots ,u_{n-1}]$ such that for each $j<n$ there is a $v\in A$ such that $a([u_0,\ldots ,u_{j-1}])=[\bot ,v]$ and $f(v)=u_j$. Then $a\cdot ^fb=c$ if there is a $b$-interrogation $u$ of $f$ by $a$, such that $au=[\top ,c]$. The partial map $a,b\mapsto a\cdot ^fb$ is the application function for $A[f]$.

We see that if in ${\cal K}_2(A)$ the element $\alpha$ is representable in $A$, by $a\in A$, and $\alpha\beta$ is defined, then $\alpha\beta (x)=a\cdot ^{\beta}x$ for every $x\in A$. We see that $\alpha\beta$ is representable in $A[\beta ]$, so $\alpha\beta\leq _T\beta$.

We are led to conjecture that a sub-pca of ${\cal K}_2(A)$ should be {\em downwards closed\/} w.r.t.\ the preorder $\leq _T$. Let us see what can be said about this.
\begin{proposition}\label{subpca} Let $A$ be a partial combinatory algebra and suppose ${\cal K}_2(A)$ is compatible with $A$. Suppose $B\subset A^A$ is nonempty and closed under the application function (if $\alpha ,\beta\in B$ and $\alpha\beta\dar$, then $\alpha\beta\in B$).\begin{rlist}
\item if for every $a\in A$ there is an element $\alpha\in B$ which extends the partial function represented by $a$, then $B$ is downwards closed w.r.t.\ $\leq _T$;
\item without the hypothesis of i), the result may fail, even if $B$ is a sub-pca of ${\cal K}_2(A)$.\end{rlist}\end{proposition}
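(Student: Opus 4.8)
The plan is to handle (i) and (ii) by quite different means: (i) by unwinding $\leq _T$ through the concrete presentation of $A[\beta ]$ recalled just before the statement, and (ii) by a cardinality argument after specializing $A$.

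\emph{Part (i).} Assume every partial function $f_a\colon x\mapsto ax$ ($a\in A$) is extended by some element of $B$, and let $\beta\in B$ and $\gamma\in A^A$ with $\gamma\leq _T\beta$; I must show $\gamma\in B$. First I would use Theorem~\ref{A[f]} to unwind $\gamma\leq _T\beta$: it says that $\gamma$ is representable with respect to $\iota _\beta$, i.e.\ there is $a\in A$ with $\gamma (x)=a\cdot ^{\beta}x$ for all $x$ (the right-hand side being everywhere defined since $\gamma$ is total). By the hypothesis of (i), choose $\alpha\in B$ extending $f_a$. The whole argument then reduces to the claim that $\alpha\beta =\gamma$: granting it, $\alpha ,\beta\in B$ and $\alpha\beta\dar$, so closure of $B$ under application yields $\gamma =\alpha\beta\in B$, which is downward closure.

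To prove $\alpha\beta =\gamma$ I would run, for each fixed $x$, the interrogation of definition~\ref{applicationdef} computing $\varphi ^x(\alpha ,\beta )$ alongside the $x$-interrogation of $\beta$ by $a$ that computes $a\cdot ^{\beta}x$ in $A[\beta ]$, and check that they agree step by step; this is exactly the correspondence underlying the observation made just before the statement, that $\alpha\beta (x)=a\cdot ^{\beta}x$ whenever $\alpha$ is representable in $A$ by $a$ and $\alpha\beta\dar$. The point needing care — and the main obstacle — is that the hypothesis supplies only $\alpha\supseteq f_a$, not $\alpha (u)=au$ for every $u$. This is enough, however: the interrogation computing $a\cdot ^{\beta}x=\gamma (x)$ terminates, and along a terminating interrogation $a$ is consulted only at coded sequences at which it returns a query or a result, hence at which $f_a$ — and therefore $\alpha$ — is already defined and agrees; values of $a$ outside ${\rm dom}(f_a)$ are never seen. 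So the ${\cal K}_2$-interrogation follows the same finite path and gives $\varphi ^x(\alpha ,\beta )=\gamma (x)$, and since this holds for every $x$ we get $\alpha\beta\dar$ and $\alpha\beta =\gamma$. (The passage between the codings $\langle\cdots\rangle$ and $[\cdots ]$ is absorbed into compatibility, exactly as in that earlier computation.)

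\emph{Part (ii).} Since only one counterexample is needed, I would take $A$ uncountable — for instance any uncountable partial combinatory algebra for which ${\cal K}_2(A)$ is compatible with $A$, such as $A={\cal K}_2(\mathbb{N})$, which already carries the codings making compatibility hold and has cardinality $2^{\aleph _0}$ — and let $B=B_0$ be the sub-pca of ${\cal K}_2(A)$ generated by $\{ k,s\}$, namely their closure under application. This $B_0$ is a sub-pca (indeed elementary, as $k,s$ satisfy $(k),(s)$ inside their own closure), and it is countable, being the set of values of the countably many closed $\{ k,s\}$-combinator terms. Now for each $c\in A$ the constant function $\hat{c}\in A^A$ is representable in $A$ (by $kc$), hence $\hat{c}\leq _T\beta$ for every $\beta$, in particular for $\beta =k\in B_0$. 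There are $|A|$, hence uncountably many, distinct constant functions, whereas $B_0$ is countable; so some $\hat{c}$ fails to lie in $B_0$. This single $\hat{c}$ witnesses failure of downward closure: $\hat{c}\leq _T k$ and $k\in B_0$, yet $\hat{c}\notin B_0$.

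Finally I would confirm that this example really lies outside the scope of (i): taking $a=kc\in A$ to represent the same constant-$c$ function, we have $f_a=\hat{c}$, a total function that no element of $B_0$ can extend (an extension of a total function is the function itself), so the hypothesis of (i) genuinely fails here and the two parts are consistent. The only mild points to verify are the existence of an uncountable compatible $A$ and that $B_0$ is a sub-pca, both of which are routine; once $A$ is chosen uncountable the rest is pure cardinality, so I expect no real difficulty in (ii) and reserve the careful bookkeeping for the interrogation-matching step in (i).
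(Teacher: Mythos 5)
Your part (i) is essentially the paper's argument: the paper likewise unwinds $\gamma\leq_T\beta$ to an $a\in A$ with $a\cdot^{\beta}x=\gamma(x)$ for all $x$, picks $\alpha\in B$ extending the partial function represented by $a$, and concludes $\alpha\beta=\gamma\in B$; you merely make explicit the point (which the paper delegates to its earlier observation that $\alpha\beta(x)=a\cdot^{\beta}x$ when $\alpha$ is representable by $a$) that along a terminating interrogation $a$ is consulted only where it is defined, so extending $f_a$ suffices. For part (ii) you take a genuinely different route. The paper stays inside Kleene's original ${\cal K}_2$ over $\mathbb{N}$ and exhibits an explicit sub-pca, namely the set of $\alpha$ with $\alpha(E_n)=n$ for all $n$ (where $E_0=\langle\,\rangle$ and $E_{n+1}=\langle E_n\rangle$), which is closed under application and carries modified combinators $k',s'$, yet excludes for instance every constant function. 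You instead move to an uncountable $A$ and count: the closure of $\{k,s\}$ under application is countable, while the uncountably many constants $\hat{c}$ all satisfy $\hat{c}\leq_T k$. Both counterexamples are valid for the proposition as stated. Yours is in one respect stronger --- your $B_0$ contains the actual $k$ and $s$ of ${\cal K}_2(A)$ and is therefore an \emph{elementary} sub-pca, whereas the paper's example is explicitly non-elementary, so you show that even elementarity does not force downward closure once $A$ is uncountable --- but in another respect weaker: a cardinality argument can never work over countable $A$, since the downward $\leq_T$-cone of any single $\beta$ is then countable, so your method says nothing about Kleene's ${\cal K}_2$ itself, which is exactly the case the paper's construction handles and the case its subsequent remark isolates as the remaining open question for elementary sub-pcas.
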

\begin{proof}  i) Suppose $\gamma\in B$ and $\beta \leq _T\gamma$. Then there is an $a\in A$ such that for all $x\in A$, $a\cdot ^{\gamma}x=\beta (x)$. If $\alpha\in B$ extends the partial function represented by $a$, we have $\alpha\gamma =\beta$. So $\beta\in B$, and $B$ is downwards closed w.r.t.\ $\leq _T$.

ii) My counterexample is a (non-elementary) sub-pca of Kleene's original ${\cal K}_2$. It is easiest to formulate in the original definition of ${\cal K}_2$: for $\alpha ,\beta\in\mathbb{N}^{\mathbb{N}}$ we say $\alpha\beta (x)=y$ if there is an $n$ such that
$$\begin{array}{rcll} \alpha\langle x,\beta (0),\ldots ,\beta (n-1)\rangle & = & y+1 & \\
\alpha\langle x,\beta (0),\ldots ,\beta (j-1)\rangle & = & 0 & \text{for all $j<n$}\end{array}$$
Then $\alpha\beta$ is defined if for all $x$ there is a $y$ such that $\alpha\beta (x)=y$.

Now define: $E_0=\langle\,\rangle$; $E_{n+1}=\langle E_n\rangle$. Let $B\subset\mathbb{N}^{\mathbb{N}}$ be given by $$B\; =\; \{\alpha\in\mathbb{N}^{\mathbb{N}}\, |\,\text{for all $n$, }\alpha (E_n)=n\}$$
It is easy to check that if $\alpha\in B$ and $\alpha\beta$ is defined in ${\cal K}_2$, then $\alpha\beta\in B$. Moreover, if $k'$ and $s'$ are the functions in $B$ which agree with $k$ (and $s$, respectively) outside $\{ E_0,E_1,\ldots\}$, then $k'$ and $s'$ satisfy the axioms $(k)$ and $(s)$ of \ref{pcadef}. So $B$ is a sub-pca of ${\cal K}_2$, but evidently not closed under `recursive in'.\end{proof}
\medskip

\noindent {\bf Remarks}\begin{arlist}\item The result of part i) in the proposition above can be strengthened a bit. In ordinary recursion theory, the poset of Turing degrees is a join-semilattice. It is not clear whether this is so for the general notion of $\leq _T$ considered here (but see section~\ref{partialsection}, but one can define the following: for $\alpha ,\gamma _1,\ldots ,\gamma _n\in A^A$ say $\alpha\leq _T(\gamma _1,\ldots ,\gamma _n)$ if $\alpha$ is representable in $A[\gamma _1,\ldots ,\gamma _n]$. Call $B\subset A^A$ an {\em ideal\/} if whenever $\gamma _1,\ldots ,\gamma _n\in B$ and $\alpha\leq _T(\gamma _1,\ldots ,\gamma _n)$, then $\alpha\in B$. One can prove that if $B$ satisfies the hypothesis of ii), then $B$ is an ideal.
\item About part ii): I do not know whether there exist elementary sub-pcas of ${\cal K}_2$ that are not downwards closed w.r.t.\ $\leq _T$.
\item I would have liked to include a statement in Proposition~\ref{subpca} saying that if $B$ is downwards closed w.r.t.\ $\leq _T$, then $B$ is an elementary sub-pca of ${\cal K}_2(A)$. The intuitive reason being, that $k$ and $s$ are definable in $A$, hence $\leq _T\beta$ for every $\beta\in B$, hence in $B$. However, this fails in general, because of the need of making $k$ and $s$ total. We have had to define $k$ and $s$ also outside the relevant interrogations. But in a general partial combinatory algebra it is not decidable whether or not a given element is a pair, or a coded sequence. We shall see that this problem disappears when we consider partial combinatory algebras of partial functions in section~\ref{partialsection}.\end{arlist} 

\noindent Theorem~\ref{factortheorem} can be generalized to certain sub-pcas of ${\cal K}_2(A)$. The proof is straightforward.
\begin{proposition}\label{factorsub} Suppose $B\subset A^A$ is a sub-pca which contains all constant functions $\hat{a}$ for $a\in A$ and the function $\rho$ from the proof of \ref{AtoA^A}. Then there is a decidable applicative morphism $\gamma :A\to B$ which has the property that whenever $\delta :A\to C$ is decidable and every element of $B$ is representable w.r.t.\ $\delta$, then there is $\varepsilon :B\to C$ such that $\varepsilon\gamma\cong\delta$. If, moreover, $B$ contains an element $\sigma$ such that for all $\alpha\in B$ and $a\in A$, $\sigma\alpha\hat{a}=\widehat{\alpha (a)}$, then $\varepsilon$ is greatest with this property.\end{proposition}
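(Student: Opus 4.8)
The plan is to run the argument of Theorem~\ref{factortheorem} almost verbatim, with $B$ playing the role previously played by ${\cal K}_2(A)$ and $C$ the role of the target pca, and to check only that the two altered hypotheses are invoked exactly where they are needed. First I would verify that $\gamma:A\to B$, $\gamma(a)=\{\hat{a}\}$, is a well-defined decidable applicative morphism. It lands in $B$ because $B$ contains every constant function; the element $\rho\in B$ realizes it exactly as in Proposition~\ref{AtoA^A} (here one uses that $B$ is closed under application, so that $\rho\hat{a}$ and $\rho\hat{a}\hat{b}$ remain in $B$); and, taking $\hat{\top},\hat{\bot}$ as Booleans of $B$, the identity combinator of $B$ serves as a decider.

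Next, given $\delta$ as in the statement, I would define $\varepsilon(\alpha)$ to be the set of elements of $C$ that represent $\alpha$ (viewed as a total function $A\to A$) with respect to $\delta$, and then reproduce the realizer construction: pick $\pi\in\delta(p)$, $\pi _i\in\delta(p_i)$, form the tuple-manipulating elements $c,s\in C$, and use the fixed-point combinator of $C$ to obtain the same $F$ with $Fabv\simeq\mbox{{\sf If} }d(r\pi _0(av))\mbox{ {\sf then} }r\pi _1(av)\mbox{ {\sf else} }Fab(cv(rb(r\pi _1(av))))$, so that $\langle abx\rangle Fab(sx)$ realizes $\varepsilon$. The inductive Claim and its proof carry over unchanged, because for $\alpha,\beta\in B$ the interrogation process is literally the one inherited from ${\cal K}_2(A)$ (I work, as in Theorem~\ref{factortheorem}, in the setting where ${\cal K}_2(A)$ is based on, or compatible with, $A$). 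The verification that $\varepsilon\gamma\cong\delta$ is then word for word the earlier one: $\langle b\rangle b\xi$, for a fixed $\xi\in\bigcup_{a'\in A}\delta(a')$, realizes $\varepsilon\gamma\preceq\delta$, and $\langle x\rangle b$ witnesses $\delta\preceq\varepsilon\gamma$ (decidability of $\varepsilon$ follows from that of $\delta$ as before, though the statement does not require it).

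The step I expect to require the most care is the bookkeeping of \emph{where} representability is actually used, since this is what makes the weakening legitimate. The auxiliary elements $\pi,\pi _i,c,s$ need no representability at all: they arise by applying the realizer of $\delta$ to values $\delta(e)$ of \emph{named} elements $e\in A$ (pairing, appending, singleton), so the mere fact that $\delta$ is an applicative morphism already produces them. The representability hypothesis enters at a single point, namely to guarantee $\varepsilon(\alpha)\neq\emptyset$; and because $\alpha$ now ranges only over $B$ rather than over all of $A^A$, the assumption that every element of $B$ is representable w.r.t.\ $\delta$ is precisely the right amount. Finally, for the conditional last clause I would copy the greatest-ness argument: from a rival $\varepsilon'$, with realizer $r'$ and realizers $s,t$ for the two directions of $\varepsilon'\gamma\cong\delta$, and with $\tau\in\varepsilon'(\sigma)$, one shows that $\langle zx\rangle s(r'(r'\tau z)(tx))$ realizes $\varepsilon'\preceq\varepsilon$. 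This argument evaluates $\sigma$ (with $\sigma\alpha\hat{a}=\widehat{\alpha(a)}$) through $\varepsilon'$, so it genuinely requires $\sigma$ to lie in the domain $B$ of $\varepsilon'$; in ${\cal K}_2(A)$ such a $\sigma$ is automatically present, whereas a general sub-pca need not contain one, which is exactly why greatest-ness is asserted only under the additional hypothesis $\sigma\in B$.
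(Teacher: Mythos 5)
Your proof is correct and follows exactly the route the paper intends: the paper offers no argument beyond the remark that Theorem~\ref{factortheorem} generalizes straightforwardly, and your adaptation --- including the careful accounting of where each hypothesis ($\hat{a}\in B$, $\rho\in B$, representability of the elements of $B$ w.r.t.\ $\delta$, and $\sigma\in B$ for greatest-ness) actually enters --- is precisely that generalization. The one step you state more casually than it deserves is decidability of $\gamma$ (taking $\hat{\top},\hat{\bot}$ as Booleans of $B$ presupposes that $B$ itself contains a suitable case operator for them, which the stated hypotheses do not obviously supply), but the paper is equally terse on this point in Proposition~\ref{AtoA^A} and in the present proposition.
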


\section{Total Combinatory Algebras of Partial Functions}\label{partialsection}
With some care, the whole set-up of this paper generalizes to the set ${\rm Ptl}(A,A)$ of partial functions $A\to A$.

For each $\alpha\in {\rm Ptl}(A,A)$ we have a partial function $\varphi _{\alpha}:{\rm Ptl}(A,A)\to A$ given by interrogations. Modifying the definition of sequential functions in such a way that we now consider nontotal sequential trees $T$ and {\em partial\/} maps $F$ from the leaves of $T$ to $A$, we easily see that a partial function ${\rm Ptl}(A,A)\to A$ is sequential, precisely if it is of the form $\varphi _{\alpha}$ for some $\alpha\in {\rm Ptl}(A,A)$. Note that in this case, given $\varphi _{\alpha}$, we can (much simpler than in the proof of \ref{seqcoding}) define the corresponding sequential tree as the set of those finite functions $s$ such that there is an interrogation of $s$ by $\alpha$ which contains all values of $s$ and is in the domain of $\alpha$. Finally, for a leaf $s$ of the tree we we can define $F(s)=b$ if there is an interrogation $u$ of $s$ by $\alpha$ such that $\alpha (u)=\langle r,b\rangle$.

Quite similar to section~\ref{pcasection}, we have a partial combinatory algebra structure on ${\rm Ptl}(A,A)$. This generalizes the construction of $\cal B$ (for $A=\mathbb{N}$) in \cite{OostenJ:casfft,LongleyJ:seqrf}. Just as in Definition~\ref{applicationdef} we have a partial function $\varphi ^a:{\rm Ptl}(A,A)\times {\rm Ptl}(A,A)\to A$ for each $a\in A$, and hence a total function $\alpha ,\beta\mapsto\alpha\beta :{\rm Ptl}(A,A)\times {\rm Ptl}(A,A)\to {\rm Ptl}(A,A)$. The notion of a bisequential function is also straightforward, and analogously to Lemma~\ref{bisequential=representable} we have, for every {\em partial\/} bisequential function $G:{\rm Ptl}(A,A)\times {\rm Ptl}(A,A)\to A$, an element $\phi _G$ such that for all $\alpha$ and $\beta$, $\varphi _{\phi _G\alpha}(\beta )\simeq G(\alpha ,\beta )$.

Again, this is simpler than in the case of total functions: no artificial construction in order to make sure that $\phi _G\alpha$ is a total function, is required. 

The proof of Theorem~\ref{A^Apca} also simplifies, because the elements $k$ and $s$ need not be artificially extended beyond what they have to perform on the relevant interrogations. It follows, that if ${\cal K}_2^p(A)$ is compatible with $A$, $k$ and $s$ can be chosen to be representable in $A$.

Let us write ${\cal K}_2^p(A)$ for the partial combinatory algebra structure on ${\rm Ptl}(A,A)$. Since the application function is total, we speak of a {\em (total) combinatory algebra}.

We have the same map $\gamma :A\to {\cal K}_2^p(A)$ as in Proposition~\ref{AtoA^A}; it is decidable, and every function $A\to A$ is representable w.r.t.\ $\gamma$. It is worth noting that in the partial case, the definition of $\rho$ in the proof of \ref{AtoA^A} can be simplified: we simply define
$$\rho (\langle\langle x,b\rangle ,a\rangle )\;\simeq\;\langle r,\langle r,ab\rangle\rangle$$
and don't need to define $\rho$ outside the set of elements of form $\langle\langle x\rangle\rangle$ or $\langle \langle x,b\rangle\rangle$. It follows, that if ${\cal K}_2^p(A)$ is compatible with $A$, this function $\rho$ is representable in $A$.

The combinatory algebra ${\cal K}_2^p(A)$ satisfies a similar semi-universal property as the one given for ${\cal K}_2(A)$ in Theorem~\ref{factortheorem}, with the map $\gamma :A\to {\cal K}_2^p(A)$, provided ${\cal K}_2^p(A)$ is compatible with $A$. This gives the following corollary.
\begin{corollary}\label{retract1} ${\cal K}_2(A)$ is an elementary sub-pca of ${\cal K}_2^p(A)$ and a retract of it in the category of partial combinatory algebras and isomorphism classes of applicative morphisms.\end{corollary}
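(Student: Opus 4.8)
The plan is to establish Corollary~\ref{retract1} by exhibiting explicit applicative morphisms in both directions and verifying that their composite (in the appropriate direction) is isomorphic to the identity, while the other composite is comparable to the identity. The two claims — that ${\cal K}_2(A)$ is an elementary sub-pca of ${\cal K}_2^p(A)$, and that it is a retract — should be handled separately, since the first is essentially a structural observation and the second is a statement in Longley's category.

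For the sub-pca claim, I would first note that there is an evident inclusion $j:A^A\hookrightarrow {\rm Ptl}(A,A)$, since a total function is in particular a partial function. The key point is that this inclusion respects application: if $\alpha,\beta\in A^A$ and $\alpha\beta$ is defined in ${\cal K}_2(A)$, then the interrogation process of $\beta$ by $\alpha$ is exactly the same whether we read $\alpha,\beta$ as total or as partial functions, so $\alpha\beta$ (as computed in ${\cal K}_2^p(A)$) coincides with $\alpha\beta$ in ${\cal K}_2(A)$ and is again total. For \emph{elementary}, I must produce $k$ and $s$ lying in $A^A$ that satisfy $(k)$ and $(s)$ with respect to \emph{both} structures. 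The natural candidates are the very elements $k,s$ constructed in the proof of Theorem~\ref{A^Apca}: these are total functions, hence members of $A^A\subset{\rm Ptl}(A,A)$, and because application agrees on total inputs, the equations they satisfy in ${\cal K}_2(A)$ are inherited verbatim in ${\cal K}_2^p(A)$. Thus the same combinators witness the axioms on both sides, which is precisely elementarity.

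For the retract claim, I would work with the isomorphism classes of applicative morphisms. The inclusion $j$ above is itself (the graph of) an applicative morphism $\iota:{\cal K}_2(A)\to{\cal K}_2^p(A)$, realized by a combinator that simply re-runs application, since application agrees on total functions. In the other direction I need a decidable applicative morphism $\pi:{\cal K}_2^p(A)\to{\cal K}_2(A)$ with $\pi\iota\cong\id$. The idea is to ``totalize'': send a partial function $\alpha\in{\rm Ptl}(A,A)$ to the set of total functions in $A^A$ that \emph{extend} $\alpha$ on the interrogation data relevant to application, in the spirit of the extension move used to make $k$ and $s$ total in Theorem~\ref{A^Apca} and to fill in $\rho$ in the proof of Proposition~\ref{AtoA^A}. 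One then checks, using compatibility with $A$ together with the remark that for ${\cal K}_2^p(A)$ the combinators $k$, $s$, $\rho$ may be taken representable in $A$, that this totalization respects application up to the realizability preorder and is decidable (via $\hat\top,\hat\bot$). Since extending a total function changes nothing on the relevant interrogations, $\pi\iota$ returns each $\alpha\in A^A$ to (something equivalent to) itself, giving $\pi\iota\cong\id_{{\cal K}_2(A)}$; this is exactly what a retract in the quotient category requires.

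\textbf{The main obstacle} I anticipate is the construction and verification of $\pi$, specifically the totalization step. A partial function may be undefined on many interrogation inputs, and to land in $A^A$ I must choose total extensions \emph{uniformly}, i.e.\ via a single realizer in ${\cal K}_2(A)$, so that the choice is itself computed rather than made pointwise. Here I expect to lean on the same device that made $k$ and $s$ total in Theorem~\ref{A^Apca} — filling in ``don't-care'' values such as $\langle q,q\rangle$ outside the significant interrogations — and to argue that this filling is harmless because those values are never consulted when application is defined. Establishing that $\pi$ genuinely has a realizer (and a decider) will require checking that the fixed-point/interrogation bookkeeping goes through just as in the proof of Theorem~\ref{factortheorem}; once that is in place, the isomorphism $\pi\iota\cong\id$ is routine, since on total inputs the extension operation is the identity up to the preorder.
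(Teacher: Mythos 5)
Your first claim (the sub-pca part) follows the paper's route, but your justification of \emph{elementarity} has a gap: you argue that $k$ and $s$ ``inherit'' the axioms because application agrees on total inputs, yet elementarity demands that $k,s\in A^A$ satisfy $(k)$ and $(s)$ with respect to ${\cal K}_2^p(A)$, i.e.\ for \emph{all} $x,y,z\in{\rm Ptl}(A,A)$, not just the total ones. Agreement of the two application maps on $A^A$ only re-proves the axioms inside ${\cal K}_2(A)$; one must actually check that the interrogations defining $k$ and $s$ still do the right thing when the oracles are partial (the paper asserts this, relying on the observation in Section~\ref{partialsection} that passing to partial functions only removes constraints).

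The serious gap is your retraction $\pi$. Read as you describe it --- $\pi(\alpha)=$ the set of total graph-extensions of $\alpha$, with ``don't-care'' values filled in --- this is not an applicative morphism ${\cal K}_2^p(A)\to{\cal K}_2(A)$. Application in ${\cal K}_2^p(A)$ is \emph{total}, so a realizer $r$ would have to satisfy: for all $\alpha,\alpha'$ and all total $\beta\supseteq\alpha$, $\beta'\supseteq\alpha'$, the application $r\beta\beta'$ is defined in ${\cal K}_2(A)$ (hence the interrogation computing $(r\beta\beta')(a)$ terminates for \emph{every} $a$) and extends $\alpha\alpha'$. Taking $\alpha=\beta$ and $\alpha'=\beta'$, this forces $r\beta\beta'$ to be a total extension of the partial function $a\mapsto\varphi^a(\beta,\beta')$ for all total $\beta,\beta'$; but $(r\beta\beta')(a)$ depends on only finitely many values of $\beta$ and $\beta'$, and whenever the interrogation of $\beta'$ by $\beta$ at $a$ is infinite one can alter $\beta,\beta'$ outside that finite part so that $\varphi^a$ becomes defined with a different value --- so no such $r$ exists. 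Your anticipated fix (the filled-in values ``are never consulted when application is defined'') misses the real obstruction: the realizer must halt, with the \emph{correct} value wherever $\alpha\alpha'$ happens to be defined, at arguments where it cannot decide whether the simulated computation converges. The paper sidesteps this entirely: its $\varepsilon(\alpha)$ is not the set of extensions of $\alpha$ but the set of \emph{representers} of $\alpha$ w.r.t.\ $\gamma$, i.e.\ total $\beta$ with $\beta\hat{a}=\widehat{\alpha(a)}$ for $a\in{\rm dom}(\alpha)$ and no constraint elsewhere ($\beta\hat{a}$ may even be undefined for $a\notin{\rm dom}(\alpha)$), and the applicative-morphism property then comes for free from the semi-universal property of ${\cal K}_2^p(A)$ (the analogue of Theorem~\ref{factortheorem}) applied to $\gamma:A\to{\cal K}_2(A)$. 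You should redirect your construction along those lines; the verification that $\varepsilon i\cong\mathrm{id}$ is then the routine part you expected.
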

\begin{proof} The choice of $k$ and $s$ we made for ${\cal K}_2(A)$ also works for ${\cal K}_2^p(A)$: so the inclusion $i:A^A\to {\rm Ptl}(A,A)$ is elementary; it is also an applicative morphism, and decidable. Furthermore, if we apply the semi-universal property of ${\cal K}_2^p(A)$ to the diagram
$$\diagram A\rto^{\gamma}\dto_{\gamma} & {{\cal K}_2(A)} \\ {{\cal K}_2^p(A)} & \enddiagram$$
we obtain an applicative map $\varepsilon :{\cal K}_2^p(A)\to {\cal K}_2(A)$. Concretely,
$$\varepsilon (\alpha )\; =\; \{\beta\, |\,\text{for all $a\in {\rm dom}(\alpha )$, $\beta\hat{a}=\widehat{\alpha (a)}$}\}$$
It is not hard to show that $\varepsilon i$ is isomorphic to the identity on ${\cal K}_2(A)$.\end{proof}
\medskip

\noindent The pattern, that definitions are simpler and theorems more elegant and smooth in the case of partial functions, extends to the study of sub-pcas of ${\cal K}_2^p(A)$. First of all we have the following proposition:
\begin{proposition}\label{turingjoins} The preorder $\leq _T$ on partial functions $A\to A$ (relative to a partial combinatory algebra structure on $A$) has binary joins.\end{proposition}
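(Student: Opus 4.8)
The plan is to produce a single partial function that serves as the join and to reduce the entire statement to one representability biconditional. Given $f,g:A\to A$, define the candidate $f\vee g:A\to A$ by $(f\vee g)(p\bot y)\simeq f(y)$ and $(f\vee g)(p\top y)\simeq g(y)$, leaving $f\vee g$ undefined on elements not of either form. This is a well-defined partial function: each tagging map is injective since $p_1(p\bot y)=p_1(p\top y)=y$, and the two images are disjoint since $p_0(p\bot y)=\bot\neq\top=p_0(p\top y')$ (I use $\bot\neq\top$, valid in any nondegenerate pca). The heart of the matter is the claim that, for \emph{every} decidable applicative morphism $\gamma:A\to B$, the function $f\vee g$ is representable w.r.t.\ $\gamma$ if and only if both $f$ and $g$ are. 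Granting this, the result is immediate. Taking $\gamma=\iota_{f\vee g}$ and using that $f\vee g$ is representable w.r.t.\ its own $\iota_{f\vee g}$ (Theorem~\ref{A[f]}), the `only if' direction shows $f$ and $g$ are both representable w.r.t.\ $\iota_{f\vee g}$, i.e.\ $f\leq_T f\vee g$ and $g\leq_T f\vee g$, so $f\vee g$ is an upper bound. Conversely, for any $k$ with $f\leq_T k$ and $g\leq_T k$, both $f$ and $g$ are representable w.r.t.\ the decidable morphism $\iota_k$, so the `if' direction gives $f\vee g$ representable w.r.t.\ $\iota_k$, that is $f\vee g\leq_T k$. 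Hence $f\vee g$ is the least upper bound.

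It remains to prove the biconditional, which is a realizer computation using the combinators $p,p_0,p_1$, a realizer $r$ for $\gamma$, and the decider $d$ for $\gamma$. For the `if' direction, given representers $r_f,r_g\in B$ of $f,g$ and transported combinators $\pi_0\in\gamma(p_0)$, $\pi_1\in\gamma(p_1)$, I would represent $f\vee g$ by the element that, on input $b\in\gamma(a)$ with $a\in{\rm dom}(f\vee g)$, forms $r\pi_0 b\in\gamma(p_0 a)$, tests it with $d$, and applies $r_g$ or $r_f$ to $r\pi_1 b\in\gamma(p_1 a)$ according to whether the tag is $\top$ or $\bot$, that is essentially $\langle x\rangle\,{\sf If}\,d(r\pi_0 x)\,{\sf then}\,r_g(r\pi_1 x)\,{\sf else}\,r_f(r\pi_1 x)$. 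For the `only if' direction, given a representer $r_h$ of $f\vee g$ and transported $\pi\in\gamma(p)$, $\beta_\bot\in\gamma(\bot)$, $\beta_\top\in\gamma(\top)$, one represents $f$ by $\langle x\rangle\,r_h(r(r\pi\beta_\bot)x)$, which sends $b\in\gamma(a)$ with $a\in{\rm dom}(f)$ to an element of $\gamma((f\vee g)(p\bot a))=\gamma(f(a))$, and represents $g$ symmetrically using $\beta_\top$.

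The only genuine subtlety, and the step I expect to require care, is the `if' direction, for two linked reasons. First, the case distinction on the tag is exactly where decidability of $\gamma$ is indispensable: without the decider $d$ there is no way to recover, inside $B$, whether $a$ was tagged by $\bot$ or by $\top$, and the construction collapses. Second, since a point $y$ may lie in ${\rm dom}(f)$ but not ${\rm dom}(g)$ (or vice versa), the non-selected branch $r_f(r\pi_1 x)$ or $r_g(r\pi_1 x)$ may be undefined; to keep the value defined I must use the standard lazy (thunked) form of definition by cases, applying $C$ to the abstractions $\langle z\rangle r_g(r\pi_1 x)$ and $\langle z\rangle r_f(r\pi_1 x)$ and forcing only the selected thunk, rather than a strict conditional. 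Everything else is bookkeeping with the realizer of $\gamma$ and the pairing combinators, and the two instantiations $\gamma=\iota_{f\vee g}$ and $\gamma=\iota_k$ then close the argument.
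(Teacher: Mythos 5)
Your proposal is correct and takes essentially the same route as the paper: the paper defines $f\sqcup g$ by exactly this tagged case split, $(f\sqcup g)([\top,x])\simeq f(x)$ and $(f\sqcup g)([\bot,x])\simeq g(x)$, and leaves the verification that it is a join to the reader. You supply that verification, correctly organized through the universal property of $\iota_{f\vee g}$ from Theorem~\ref{A[f]}, and you rightly flag the need for a lazy conditional in the realizer so that the unselected branch's possible undefinedness does not kill the computation.
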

\begin{proof} Given partial functions $f$ and $g$ define $f\sqcup g$ by:
$$(f\sqcup g)(y)\;\simeq\; {\sf If}\; p_0y\; {\sf then}\; f(p_1y)\; {\sf else}\; g(p_1y)$$
So $(f\sqcup g)([\top ,x])\simeq f(x)$ and $(f\sqcup g)([\bot ,x])\simeq g(x)$. It is left to the reader that $f\sqcup g$ is a join for $f,g$ with respect to $\leq _T$.\end{proof}
\medskip

\noindent One can now simply define an {\em ideal\/} of ${\cal K}_2^p(A)$ to be a downwards closed set which is also closed under $\sqcup$. Given a subset $B$ of ${\rm Ptl}(A,A)$, let us write $B_{\subseteq}$ for the set
$$B_{\subseteq}\; =\;\{ f\in {\rm Ptl}(A,A)\, |\, \text{there is $g\in B$ such that $f\subseteq g$}\}$$
Furthermore let us write $\bar{a}$ for the partial function $x\mapsto ax$, for $a\in A$.
\begin{proposition}\label{ptlsubpca} Let $A$ be a partial combinatory algebra and suppose ${\cal K}_2^p(A)$ is compatible with $A$. Suppose $B\subset {\rm Ptl}(A,A)$ is closed under the application function.
\begin{rlist}\item If $B$ is downwards closed w.r.t.\ $\leq _T$, then $B$ is an elementary sub-pca of ${\cal K}_2^p(A)$;
\item If for every $a\in A$ we have $\bar{a}\in B$, then $B_{\subseteq}$ is an ideal of ${\cal K}_2^p(A)$.\end{rlist}\end{proposition}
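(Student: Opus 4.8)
The plan is to reduce both parts to two facts that are essentially already on record. The first is the partial‑function analogue of the remark preceding Proposition~\ref{subpca}: if $\alpha\in{\cal K}_2^p(A)$ is representable in $A$ by $a$, then its application $\bar{a}\gamma$ in ${\cal K}_2^p(A)$ computes the partial function $x\mapsto a\cdot^{\gamma}x$. The second is that every element representable in $A$ is $\leq_T$‑minimal, i.e. $\leq_T\gamma$ for every $\gamma\in{\rm Ptl}(A,A)$; this holds because such an element is representable in $A[\gamma]$ for each $\gamma$ (the morphism $\iota_{\gamma}:A\to A[\gamma]$ tracks $A$‑application, so a pure $A$‑computation never needs to consult $\gamma$), and hence sits below everything in $\leq_T$.

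For part i) I take $B$ nonempty, as is implicit in the conclusion that it is a sub-pca. Since ${\cal K}_2^p(A)$ is compatible with $A$, the combinators $k$ and $s$ of ${\cal K}_2^p(A)$ may be chosen representable in $A$, as recorded at the start of this section. By $\leq_T$‑minimality, $k\leq_T\gamma$ and $s\leq_T\gamma$ for any fixed $\gamma\in B$, so downward closure of $B$ puts $k,s\in B$. These are the very elements witnessing the axioms $(k)$ and $(s)$ in ${\cal K}_2^p(A)$; and because $B$ is closed under the (total) application inherited from ${\cal K}_2^p(A)$, every subterm $k\alpha,\ s\alpha,\ s\alpha\beta,\ \alpha\gamma,\ \beta\gamma$ of the defining equations again lies in $B$, so $k\alpha\beta=\alpha$ and $s\alpha\beta\gamma=(\alpha\gamma)(\beta\gamma)$ hold verbatim in $B$. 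Thus $k,s$ satisfy $(k),(s)$ both in $B$ and in ${\cal K}_2^p(A)$, which is exactly what makes $B$ an \emph{elementary} sub-pca.

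For part ii) I prove separately that $B_{\subseteq}$ is downwards closed and closed under $\sqcup$. For downward closure, suppose $h\leq_T f$ with $f\subseteq g$ and $g\in B$. Spelling out $\leq_T$, there is $a\in A$ with $a\cdot^{f}x=h(x)$ for all $x\in{\rm dom}(h)$. Any $f$‑interrogation uses only values of $f$, hence remains a valid $g$‑interrogation since $g$ extends $f$; so $a\cdot^{f}x\simeq a\cdot^{g}x$ on defined computations, and via the correspondence above $h\subseteq\bar{a}f\subseteq\bar{a}g$. By hypothesis $\bar{a}\in B$, and $g\in B$, so closure under application gives $\bar{a}g\in B$; hence $h\subseteq\bar{a}g\in B$, i.e. $h\in B_{\subseteq}$.

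For closure under $\sqcup$, take $f_1\subseteq g_1$ and $f_2\subseteq g_2$ with $g_1,g_2\in B$; since $\sqcup$ is monotone in each argument with respect to $\subseteq$, $f_1\sqcup f_2\subseteq g_1\sqcup g_2$, so it suffices to put $g_1\sqcup g_2$ in $B_{\subseteq}$. For this I would exhibit, using compatibility of ${\cal K}_2^p(A)$ with $A$, an element $j\in A$ such that $\bar{j}$ is the combinator which, on a query decoded as $[v,x]$, consults its first oracle at $x$ when $v=\top$ and its second at $x$ when $v=\bot$; then the double application $(\bar{j}g_1)g_2$ realizes exactly the clauses of Proposition~\ref{turingjoins} defining $g_1\sqcup g_2$. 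As $\bar{j}\in B$ by hypothesis and $B$ is closed under application, $(\bar{j}g_1)g_2\in B$, so $g_1\sqcup g_2\subseteq(\bar{j}g_1)g_2\in B$ and $f_1\sqcup f_2\in B_{\subseteq}$. The main obstacle throughout is the translation between application in ${\cal K}_2^p(A)$ and in $A[\gamma]$: making both the identity $\bar{a}\gamma(x)=a\cdot^{\gamma}x$ and the construction of $j$ fully rigorous requires threading the compatibility data — the coding translations and the element $c$ with $cq=\bot$ and $cr=\top$ — through the interrogation processes, since we only assume ${\cal K}_2^p(A)$ is \emph{compatible with}, not \emph{based on}, $A$. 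The remaining steps are routine bookkeeping with $\leq_T$‑minimality of $A$‑representable elements and the totality of application.
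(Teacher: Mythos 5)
Your proof is correct and takes essentially the same route as the paper's: part i) reduces to the $A$-representability (hence $\leq_T$-minimality) of the chosen $k$ and $s$ together with downward closure and closure under the total application, and part ii) uses exactly the paper's two ingredients, namely an element of $A$ whose double application realizes $\sqcup$ and the observation that an $f$-interrogation remains valid for any $g\supseteq f$, so that $\gamma\subseteq\bar{a}g\in B$. The coding translations under mere compatibility, which you flag as routine bookkeeping, are likewise left implicit in the paper.
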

\begin{proof} The first item is the remark made before, that in the partial case we can choose $k$ and $s$ to be representable in $A$.

For the second item: first we remark that there is an element $a\in A$ such that for any $\gamma _1,\gamma _2\in {\rm Ptl}(A,A)$, $\bar{a}\gamma _1\gamma _2=\gamma _1\sqcup\gamma _2$ in ${\cal K}_2^p(A)$. This is left to the reader. So from the hypotheses on $B$ it follows that $B$ is closed under $\sqcup$. Since application in ${\cal K}_2^p(A)$ is monotone in both variables w.r.t.\ $\subseteq$, it follows that also $B_{\subseteq}$ is closed under $\sqcup$.

Next, we see that $B_{\subseteq}$ is downwards closed w.r.t.\ $\leq _T$. Suppose $\beta '\in B$, $\beta\subseteq\beta '$ and $\gamma\leq _T\beta$. We need to show: $\gamma\in B_{\subseteq}$. Since $\gamma\leq _T\beta$, there is an $a\in A$ satisfying $a{\cdot}^{\beta}x=\gamma (x)$ for all $x\in {\rm dom}(\gamma )$. Then also $a{\cdot}^{\beta '}x=\gamma (x)$ for all $x\in {\rm dom}(\gamma )$. But this means that $\gamma\subseteq \bar{a}\beta '$. Since $B$ is closed under application and $\bar{a},\beta '\in B$ by assumption, $\gamma\in B_{\subseteq}$ as desired.\end{proof}
\medskip

\noindent We also have the following generalization of the factorization theorem:
\begin{theorem}\label{ptlfactortheorem} Suppose $B$ is an elementary sub-pca of ${\cal K}_2^p(A)$ containing the (simplified) function $\rho$ of the proof of \ref{AtoA^A} and all constant functions $\hat{a}$. Let $\gamma :A\to B$ be the decidable applicative morphism $\gamma (a)=\{\hat{a}\}$. Then for any decidable applicative morphism $\delta :A\to C$ such that every partial function in $B$ is representable w.r.t.\ $\delta$, there is an applicative morphism $\varepsilon :B\to C$ satisfying $\varepsilon\gamma\cong\delta$.\end{theorem}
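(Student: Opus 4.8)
The plan is to follow the proof of Theorem~\ref{factortheorem} almost verbatim, transposing it from the total pca ${\cal K}_2(A)$ to the sub-pca $B$ of the partial-function algebra ${\cal K}_2^p(A)$, and noting where the partial setting makes the argument easier rather than harder. Throughout I assume, as elsewhere in this section, that ${\cal K}_2^p(A)$ is compatible with $A$, so that interrogations are phrased via the application and the tuple coding $[\cdot,\ldots,\cdot]$ of $A$ and the Booleans $\bot,\top$. First I would record that $\gamma:A\to B$, $\gamma(a)=\{\hat a\}$, really is a decidable applicative morphism \emph{into} $B$: this is exactly Proposition~\ref{AtoA^A}, except that the realizer $\rho$ and the constant functions $\hat a$ used there must lie in $B$ --- which is precisely the hypothesis on $B$. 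The Booleans of $B$ may be taken to be $\hat\bot,\hat\top$, so $\gamma$ is decidable.

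Next, define $\varepsilon:B\to C$ by letting $\varepsilon(\alpha)$ be the set of $c\in C$ that represent $\alpha$ w.r.t. $\delta$. By the hypothesis that every partial function in $B$ is representable w.r.t. $\delta$, each $\varepsilon(\alpha)$ is nonempty, so $\varepsilon$ is a total relation. The heart of the argument is to produce a realizer for $\varepsilon$. As in Theorem~\ref{factortheorem}, I would use the fixed-point operator of $C$ to define an element $F$ that replays, inside $C$, the $y$-interrogation of $\beta$ by $\alpha$: given representatives $a\in\varepsilon(\alpha)$, $b\in\varepsilon(\beta)$ and a representative $x\in\delta(y)$, $F$ repeatedly tests, via the decider of $\delta$, whether the next value of $\alpha$ is a query $p\bot e$ or a result $p\top g$, feeds queries through $b$, and accumulates the code of the growing interrogation using the pairing and cons combinators of $C$ transported from $A$ along $\delta$. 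The key is the Claim, proved by induction on the length $k$ of the interrogation: for every $y$-interrogation $u=[u_0,\ldots,u_{k-1}]$ of $\beta$ by $\alpha$ there is $v\in\delta([y,u_0,\ldots,u_{k-1}])$ with $Fab(sx)\simeq Fabv$; and when the interrogation yields a result $g$, one reads off $Fabv=r\pi_1(av)\in\delta(g)$. Hence $\langle x\rangle Fab(sx)$ represents $\alpha\beta$ w.r.t. $\delta$, i.e. lies in $\varepsilon(\alpha\beta)$, and $\langle abx\rangle Fab(sx)$ realizes $\varepsilon$.

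The point genuinely worth checking is that partiality is harmless. Representability w.r.t. $\delta$ is a condition only over the \emph{domain} of the function, so to conclude $\langle x\rangle Fab(sx)\in\varepsilon(\alpha\beta)$ one only needs the simulation to terminate correctly for those $y$ at which $\alpha\beta$ is defined; and $\varphi^y(\alpha,\beta)\dar$ means exactly that some $y$-interrogation of $\beta$ by $\alpha$ reaches a result, which is precisely the terminating branch of the Claim. Moreover application in $B$ is total (it is inherited from the total application of ${\cal K}_2^p(A)$, and $B$ is closed under it), so the realizer condition is required of, and supplied for, every pair $\alpha,\beta\in B$. Thus neither the restriction to the sub-pca $B$ nor the move from total to partial functions disturbs the inductive simulation; this is the only new verification, and it is the single real obstacle, everything else being copied from Theorem~\ref{factortheorem}.

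It then remains to check decidability of $\varepsilon$ and $\varepsilon\gamma\cong\delta$, both exactly as before. Decidability is immediate from the decider of $\delta$ together with the fact that $\hat\bot,\hat\top$ serve as Booleans in $B$. For $\varepsilon\gamma\cong\delta$: if $b$ represents $\hat a$ w.r.t. $\delta$ then, fixing any $\xi\in\bigcup_{a'}\delta(a')$, we get $b\xi\in\delta(\hat a(a'))=\delta(a)$, so $\langle b\rangle b\xi$ realizes $\varepsilon\gamma\preceq\delta$; conversely if $b\in\delta(a)$ then $\langle x\rangle b$ represents $\hat a$, and the $\mathbf{K}$-combinator of $C$ realizes $\delta\preceq\varepsilon\gamma$. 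Since the theorem asks only for the \emph{existence} of $\varepsilon$ with $\varepsilon\gamma\cong\delta$, and not for greatness, no analogue of the final part of Theorem~\ref{factortheorem} is needed --- which is exactly why, in the partial setting, the proof comes out straightforward.
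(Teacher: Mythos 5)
Your proposal is correct and is exactly the argument the paper intends: the theorem is stated without an explicit proof, as a direct generalization of Theorem~\ref{factortheorem}, and your adaptation (reusing the fixed-point simulation $F$ and the inductive Claim, observing that representability only constrains points in the domain so partiality is harmless, and dropping the ``greatest'' clause) supplies precisely the missing details.
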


Let us apply this to a specific case. For $B$, let us take $T(A)=\{\bar{a}\, |\, a\in A\}$ (where $\bar{a}$ is as in \ref{ptlsubpca}). Since every $\bar{a}$ is representable in $A$, we have a triangle
$$\diagram A\drto_{\gamma}\rrto^{{\rm id}_A} & & A \\  & T(A)\urto_{\varepsilon}\enddiagram$$
which commutes up to isomorphism: $\varepsilon\gamma\cong {\rm id}_A$. About the other composition: $\gamma\varepsilon$, we cannot say much. However, there is another map $\varepsilon ':T(A)\to A$ which makes the triangle commute. Define: $\varepsilon '(\alpha )\; =\; \{ a\, |\, \bar{a}=\alpha\}$. There is an element $b\in A$ such that for every $a\in A$: $\bar{b}\hat{a}=\bar{a}$. This means, that $\gamma\varepsilon '\preceq {\rm id}_{T(A)}$ (the other inequality does not hold). This means that $\gamma\dashv\varepsilon '$ is an adjoint pair of decidable applicative morphisms. From the theory of geometric morphisms between realizability toposes in chapter 2 of \cite{OostenJ:reaics} it follows that there is a geometric surjection: ${\sf RT}(T(A))\to {\sf RT}(A)$. We have obtained the following theorem:
\begin{theorem}\label{covertheorem} Every realizability topos is a quotient of a realizability topos on a total combinatory algebra.\end{theorem}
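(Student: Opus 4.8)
The plan is to read the statement off the construction carried out in the paragraph preceding it. By definition every realizability topos is ${\sf RT}(A)$ for some partial combinatory algebra $A$, so it suffices, for an arbitrary pca $A$, to produce a \emph{total} combinatory algebra $B$ and a geometric surjection ${\sf RT}(B)\to {\sf RT}(A)$. I would take $B=T(A)=\{\bar a\mid a\in A\}$, the substructure of ${\cal K}_2^p(A)$ already introduced, and feed into the machinery the two decidable applicative morphisms $\gamma:A\to T(A)$ and $\varepsilon':T(A)\to A$ together with the established relations $\varepsilon'\gamma\cong\id_A$ and $\gamma\varepsilon'\preceq\id_{T(A)}$.

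First I would check that the set-up applies to an \emph{arbitrary} pca and that $T(A)$ is genuinely a total combinatory algebra. For the former, I would build ${\cal K}_2^p(A)$ \emph{based on} $A$ in the sense of Definition~\ref{basedonA} (use the internal tupling $[\cdot,\ldots,\cdot]$ for the coding and the Booleans $\bot,\top$ for $q,r$); being based on $A$ makes ${\cal K}_2^p(A)$ compatible with $A$ automatically, and since every pca is infinite there is no restriction on $A$. Totality of $T(A)$ is then immediate: application in ${\cal K}_2^p(A)$ is a total operation and $T(A)$ is closed under it, so the inherited application is total. Moreover $T(A)$ is downward closed with respect to $\leq_T$, since the $\bar a$ are exactly the functions representable in $A$, i.e.\ the $\leq_T$-least ones (if $\gamma\leq_T\bar a$ then $\gamma$ is representable in $A[\bar a]\cong A$, hence $\gamma=\bar b$ for some $b$). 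Thus Proposition~\ref{ptlsubpca}(i) makes $T(A)$ an \emph{elementary} sub-pca, and it is a genuine total combinatory algebra.

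Next I would pass from the adjunction of applicative morphisms to a geometric morphism of toposes. By the correspondence between (adjoint pairs of) decidable applicative morphisms and geometric morphisms developed in chapter~2 of \cite{OostenJ:reaics}, the adjunction $\gamma\dashv\varepsilon'$ induces a geometric morphism ${\sf RT}(T(A))\to {\sf RT}(A)$. The decisive extra input is the invertibility of the unit, $\varepsilon'\gamma\cong\id_A$: this forces the inverse-image functor to be faithful, so the morphism is a geometric \emph{surjection}; the one-sided relation $\gamma\varepsilon'\preceq\id_{T(A)}$, with the reverse inequality failing, is precisely what keeps this surjection from being an equivalence (consistent with $T(A)$ being properly larger than $A$). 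A geometric surjection exhibits its codomain as a quotient, so ${\sf RT}(A)$ is a quotient of ${\sf RT}(T(A))$, and $T(A)$ is total; this is exactly the claim.

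I expect the only real obstacle to be the bookkeeping in this last translation step: one must confirm, against the precise formulations in \cite{OostenJ:reaics}, that an adjoint pair of decidable applicative morphisms with invertible unit induces a geometric morphism in the asserted orientation (surjection downstairs onto ${\sf RT}(A)$, with the total algebra upstairs) and that faithfulness of the inverse image is what the isomorphism $\varepsilon'\gamma\cong\id_A$ delivers. No new combinatorial work on $A^A$ or ${\rm Ptl}(A,A)$ is required beyond what the earlier sections have already supplied; everything rests on Proposition~\ref{ptlsubpca}, the explicit description of $\gamma$ and $\varepsilon'$, and the cited theory of geometric morphisms between realizability toposes.
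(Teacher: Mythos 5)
Your proposal is correct and follows essentially the same route as the paper: the paper's proof is exactly the preceding paragraph, taking $B=T(A)\subset{\cal K}_2^p(A)$, using the adjoint pair $\gamma\dashv\varepsilon'$ of decidable applicative morphisms with $\varepsilon'\gamma\cong{\rm id}_A$ and $\gamma\varepsilon'\preceq{\rm id}_{T(A)}$, and invoking the correspondence with geometric morphisms from chapter 2 of \cite{OostenJ:reaics} to obtain the geometric surjection ${\sf RT}(T(A))\to{\sf RT}(A)$. The additional checks you supply (that ${\cal K}_2^p(A)$ can be taken compatible with $A$, that $T(A)$ is total and $\leq_T$-downward closed, hence an elementary sub-pca by Proposition~\ref{ptlsubpca}) are consistent with, and slightly more explicit than, what the paper records.
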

\medskip

\noindent {\bf Acknowledgement}. Part of the research reported here was carried out by my former Ph.D.\ student Bram Arens.

\begin{small}
\bibliographystyle{plain}

\end{small}
\end{document}